\author{David Holmes}
\address{Mathematisch Instituut Leiden, 
Niels Bohrweg 1,
2333 CA Leiden,
The Netherlands}
\email{holmesdst@math.leidenuniv.nl}
\newcommand{\p}{\mathbb{P}}
\newcommand{\oo}{\mathcal{O}}
\newcommand{\qq}{\mathbb{Q}}
\newcommand{\zz}{\mathbb{Z}}
\newcommand{\R}{\mathbb{R}}
\newcommand{\hh}{\operatorname{h}}
\newcommand{\uv}{_{\nu}}
\newcommand{\defeq}{\stackrel{\text{\tiny def}}{=}}
\DeclareMathOperator{\ord}{ord}
\DeclareMathOperator{\length}{length}
\DeclareMathOperator{\HH}{H}
\newcommand{\Span}[1]{\left<#1\right>}
\newcommand{\cc}{\mathscr{C}}
\newcommand{\intersect}[3][\nu]{\operatorname{\iota_{#1}}\left(#2,#3\right)}
\newcommand{\abs}[1]{\left\lvert#1\right\rvert}
\newcommand{\algcl}{^{\textrm{alg}}}
\newcommand{\on}[1]{\operatorname{#1}}
\theoremstyle{definition}
\newtheorem{df}{Definition}
\newtheorem{remark}[df]{Remark}
\newtheorem{assumption}[df]{Assumption}
\theoremstyle{plain}
\newtheorem{proposition}[df]{Proposition}
\newtheorem{lemma}[df]{Lemma}
\newtheorem{theorem}[df]{Theorem}
\newtheorem{corollary}[df]{Corollary}
\date{\today}
\title[Heights on hyperelliptic Jacobians]{An Arakelov-theoretic approach to na\"ive heights on hyperelliptic Jacobians}
\begin{document}
\newcommand{\done}{\delta_3}
\newcommand{\dtwo}{\delta_1}
\newcommand{\dthree}{\delta_2}
\newcommand{\dfour}{\delta_4}
\newcommand{\deltalambda}{\delta_5}
\newcommand{\dfive}{\delta_6}
\newcommand{\dsix}{\delta_7}
\newcommand{\dseven}{\delta_8}
\newcommand{\deight}{\delta_9}

\maketitle

\newcounter{nootje}
\setcounter{nootje}{1}
\renewcommand\check[1]{[*\thenootje]\marginpar{\tiny\begin{minipage}{20mm}\begin{flushleft}\thenootje : #1\end{flushleft}\end{minipage}}\addtocounter{nootje}{1}}

\renewcommand\check[1]{}

\newcommand{\dd}{{\operatorname{d}}}

\begin{abstract}
We use Arakelov theory to define a height on divisors of degree zero on a hyperelliptic curve over a global field, and show that this height has computably bounded difference from the N\'eron-Tate height of the corresponding point on the Jacobian. We give an algorithm to compute the set of points of bounded height with respect to this new height. This provides an `in principle' solution to the problem of determining the sets of points of bounded N\'eron-Tate heights on the Jacobian. We give a worked example of how to compute the bound over a global function field for several curves, of genera up to 11. 
\end{abstract}

\setcounter{tocdepth}{1}
\tableofcontents

\section{Introduction}
\subsection{Previous explicit computational work on N\'eron-Tate heights}

The N\'eron-Tate height was defined by N\'eron \cite{neron1965quasi}. The problems of computing the height of a given point on the Jacobian of a curve and computing the (finite) sets of points of bounded height on the Jacobian have been studied since the work of Tate in the 1960s, who gave a simpler formula for N\'eron's height. Using this formula, Tate (unpublished), Dem'janenko \cite{MR0227166}, Zimmer \cite{MR0419455}, Silverman \cite{MR1035944} and more recently Cremona, Prickett and Siksek \cite{MR2197860}, Uchida \cite{uchida2008difference} and Bruin \cite{bruin2013} have given increasingly refined algorithms for the case of elliptic curves. Meanwhile, in the direction of increasing genus, Flynn and Smart \cite{flynn_and_smart} gave an algorithm for the above problems for genus 2 curves building on work of Flynn \cite{MR1219694}, which was later modified by Stoll (\cite{stoll1999height} and \cite{stoll2002height}). Stoll has announced an extension to the hyperelliptic genus 3 case \cite{stoll_slides}. 

The technique used by all these authors was to work with a projective embedding either of the Kummer variety, or (in the case of Dem'janenko) of the Jacobian itself. Using equations for the duplication maps, they obtain results on heights using Tate's `telescoping trick'. However, such projective embeddings become extremely hard to compute as the genus grows - for example, the Kummer variety is $\p^1$ for an elliptic curve, is a quartic hypersurface in $\p^3$ for genus 2 and for genus 3 hyperelliptic curves is given by a system of one quadric and 34 quartics in $\p^7$ \cite{mullerthesis}. It appears that to extend to much higher genus using these techniques will be impractical. 

In \cite{holmes2010canonical}, the author used techniques from Arakelov theory to give an algorithm to compute the N\'eron-Tate height of a point on the Jacobian of a hyperelliptic curve, and a similar (though different) algorithm for the same problem was given by M\"uller in \cite{SteffenComputation}. Both gave computational examples in much higher genera (9 and 10 respectively) than had been possible with previous techniques. In this paper, we apply Arakelov theory to the problem of computing the sets of points of bounded height. For practical reasons, we will eventually make certain restrictions on the fields considered and on the shape of the curve, namely we insist that the field either has positive characteristic or is $\mathbb{Q}$, and that there is a rational Weierstrass point at infinity. This is discussed in Remark \ref{rk:inf_ass}. 

\subsection{Relation to classical na\"ive heights}
Let $C$ be a hyperelliptic curve over a global field, with marked Weierstrass point $\infty$ and Jacobian $J$. Let $p = [D-g\cdot \infty]$ be a point on the Jacobian $J$, where $D$ is a suitably chosen divisor on the curve $C$. We will define various intermediate heights, but the final na\"ive height of $p$ (denoted $\on{h}^\dagger(p)$) is given by the height of the polynomial which vanishes at the `$x$-coordinates' of points in $D$ (with multiplicity). This is equal to the `classical' na\"ive height of the image of $p$ under the projective embedding given by a certain linear subspace of $\on{H}^0(J, 2 \vartheta)$, where $\vartheta$ is the theta line bundle, i.e. the line bundle associated to the divisor arising as the image of $C^{g-1}$ under the usual map $C^g \rightarrow J$. As such, it is clear that $\on{h}^\dagger \le \hat{\on{h}} + c$ for some constant $c$; the main result of this paper is to give a practical method to \emph{find} a bound. 



\subsection{Practicality regarding searching for points of bounded height}
To determine the number of points of bounded N\'eron-Tate height on a Jacobian, one usually constructs a `na\"ive' height with bounded difference from the N\'eron-Tate height, and then searches for points of bounded na\"ive height. As such, the two main determinants of the speed of such an algorithm will be the size of the bound on the height differences and the dimension of the region in which one must search for points.


\subsubsection{Number fields}
Let $C$ be a curve of genus $g$ over a number field. The algorithm in this paper requires a search region of dimension $g$.  In this paper we do not give a new algorithm for bounding the local Archimedean height difference (see Section \ref{sec:Arch}), but we can estimate the sizes of the bounds produced by techniques in the literature. Bounds using Merkl's theorem \cite{couveignes2011computational} will be extremely large. Indeed, a Merkl atlas must contain at least $2g+2$ charts (since every Weierstrass point must lie at the centre of a chart), and the form of Merkl's theorem then yields a summand like $1200(2g+2)^2 \approx 4800g^2$ in the difference between the heights. A factor like $g^2$ seems hard to avoid (for example such a factor appears again in Lemma \ref{bounding Phi}), but the coefficient $4800$ is very bad from a practical point of view; since these are differences between logarithmic heights, we obtain a factor like $\exp(4800g^2)$ in the ratio of the exponential heights, making a search for rational points unfeasible in practise. The author's PhD thesis \cite{holmesPhDThesis} contains an alternative algorithm that does not make use of Merkl's theorem (and so \emph{may} yield better bounds) but is much more cumbersome to write down. There is some hope that techniques from numerical analysis may give much sharper bounds, but unfortunately they will not readily give \emph{rigorous} bounds. This is important as the main intended application of these results is to \emph{proving} statements about sets of points of bounded height. If you only need something that almost certainly works in practice, then simply hunting for points of `reasonably large' na\"ive height should be sufficient. 

\subsubsection{Function fields}
In the case of a positive-characteristic global field, the height-difference bounds in this paper become substantially smaller, but still not yet small enough to be useful. In Theorem \ref{thm:eg}, we compute bounds for three curves (of genera 2, 4 and 11) over $\mathbb{F}_p(t)$ of the form $y^2 = x^{2g+1}+t$. The bounds we obtain are very roughly of the size $g^4\log p$. Even in the genus 2 example (where we work over $\mathbb{F}_3$, obtaining a bound of $86 \log 3$), to complete a very na\"ive  search for points would require approximately $p^{300}$ factorisations of univariate polynomials over $\mathbb{F}_3$, which is entirely impractical (though with sieving techniques one could hope to do much better). The algorithm presented in this paper is not optimised, so with further work we hope it will be possible in future to make this method practical in some higher genera. 

\subsubsection{Applications}
If the algorithms in this paper can be made practical, they have applications to the problem of saturation of Mordell-Weil groups (see \cite{siksek1995infinite} or \cite{stoll2002height}), to the computation of integral points on hyperelliptic curves (see \cite{integral_points_on_hyp}), to the use of Manin's algorithm \cite{manin_cyclo}, and for numerically testing cases of the Conjecture of Birch and Swinnerton-Dyer. 

\subsubsection{Some open problems}
\begin{itemize}
\item
improve the bounds produced by this algorithm, to make searching for points practical in some small genera;
\item find a practical way to compute bounds at Archimedean places, and even to find good (small) bounds;
\end{itemize}

\subsection{Other algorithms for heights in arbitrary genus}
\label{sec:projective_embeddings}



It appears that it would be possible to extend the projective-embedding-based approaches mentioned above to give `in principle'  algorithms for bounding the difference between the N\'eron-Tate and na\"ive heights for curves of arbitrary genus.  Mumford \cite{mumford_equations_1} and Zarhin and Manin \cite{zarhin_manin_heights} describe the structure of the equations for abelian varieties embedded in projective space and the corresponding heights and height differences, respectively. To apply these results it is necessary to give an algorithm to construct these projective embeddings for Jacobians for curves of arbitrary genus. Work in this direction includes \cite{van1998equations} and \cite{ReidThesis} in the hyperelliptic case, and \cite{anderson2002edited} in the general case. A bound on the difference between the N\'eron-Tate height and the na\"ive height arising from such an embedding is given by Propositon 9.3 (page 665) in the paper \cite{david2002minorations} of David and Philippon, using an embedding of the Jacobian using $16 \vartheta$. An algorithm for the construction of this embedding has yet to be written down.



\subsection{}
This paper bears some resemblance to the final two chapters of the author's PhD thesis \cite{holmesPhDThesis}. The author would like to thank Samir Siksek for introducing him to the problem, and also Steffen M\"uller and Ariyan Javanpeykar for many helpful discussions, as well as very thorough readings of a draft version. Finally, the author is very grateful to the anonymous referee: firstly for a very rapid and helpful report, which has greatly improved the exposition of the paper, and secondly for some {\tt MAGMA} code which substantially improved the bounds obtained in Section \ref{sec:e.g.}. 


\section{Outline} \label{sec:outline}
Let $K$ be a global field, and $L/K$ a finite extension. Write $M_L$ for a proper set of absolute values of $L$, and $\abs{-}_\nu$ for the valuation at an element $\nu \in M_L$ (see Definition \ref{df:norm normalisations} for our conventions regarding these). We define the (absolute) height of an element $x \in L$ by 
$$\on{h}(x) = \frac{1}{[L:K]} \sum_{\nu \in M_L} \log \max(\abs{x}^{-1}_\nu, 1) $$
and $\on{H}(x) = \exp \on{h}(x)$. This extends to give a well-defined height on the algebraic closure $K\algcl$ of $K$. 

The definition of our first na\"ive height is analogous to this. Let $C/K$ be a hyperelliptic curve. For each absolute value $\nu$ of $K$, we will construct a metric or pseudo-metric $\dd_\nu$ on divisors on $C$ which measures how far apart they are in the $\nu$-adic topology. Given a suitable degree-zero divisor $D$ on $C$ corresponding (up to 2-torsion points) to the point $[D]$ on the Jacobian of $C$, we define the na\"ive height of $[D]$ by 
\begin{equation*}
 \on{h}^n([D]) = \sum_{\nu \in M_K} \log \dd_\nu(D, D')^{-1}
\end{equation*}
where $D'$ is a chosen divisor which is linearly equivalent to $-D$ (up to addition of divisors representing 2-torsion points on the Jacobian). Since the curve $C$ is compact and our metrics continuous, the function $\dd_\nu(D, D')^{-1}$ is bounded below uniformly in $D$, and so we may use $\log(-)$ in place of $\log\left(\max(-,1)\right)$. 

We define these metrics at non-Archimedean absolute values in Definition \ref{df:non}. Theorem \ref{thm:non-Arch} bounds the difference of the distance between two divisors and their local N\'eron pairing at a non-Archimedean absolute value. The hardest aspect of this is allowing for the fact that the model of $C$ obtained by taking the closure inside projective space over the integers of $K$ is not in general a regular scheme, so we must compute precisely how the process of resolving its singularities will affect the intersection pairing. In Definition \ref{df:Arch_metric} we define a pseudo-metric on $C$ at each Archimedean absolute value. Theorem \ref{lem:rho} bounds the difference between this pseudo-metric and the local N\'eron pairing.

We apply Theorem \ref{FH} (due to Faltings and Hriljac) to bound the difference between our height and the N\'eron-Tate height. We then write down two more na\"ive heights, with successively simpler definitions, each time bounding in an elementary fashion the difference from the N\'eron-Tate height. We give a method to compute the number of points of bounded height for the simplest of these na\"ive heights, completing the algorithm. In Theorem \ref{thm:eg} we give a worked example of how to compute these bounds for several curves including a genus 11 curve over $\mathbb{F}_{101}(t)$.

\subsection{Setup and notation}
\label{sec:notation}

\newcommand{\Q}{\mathbb{Q}}
\begin{df}\label{df:hyperelliptic_curve}
We work over a fixed global field $K$ with $2 \in K^\times$ and with fixed algebraic closure $K\algcl$. We fix an integer $g>0$ and a non-zero polynomial $f(X,S) = \sum_{i=0}^{2g+2} f_i X^iS^{2g+2-i} \in K[X,S]$ with exactly $2g + 2$ distinct zeroes in $\mathbb{P}^1(K\algcl)$. We denote by $C$ the curve of genus $g$ over $K$ embedded in weighted projective space $\p(1,1,g+1)$ with coordinates $X$, $S$, $Y$,  defined by the equation $Y^2 = f(X,S)$. We call such a curve a \emph{hyperelliptic curve}. We write $x = X/S$, $y = Y/S^{g+1}$, $s = S/X$ and $y' = Y/X^{g+1}$. We often write $x_p$ for the value of $x$ at $p$, etc. 
\end{df}


\begin{df}
We say that a divisor $D$ on $C$ is \emph{semi-reduced} if it is effective and if there does not exist a prime divisor $p$ of $C$ such that $D \ge p + p^-$ (where $p^-$ denotes the image of $p$ under the hyperelliptic involution). In particular, any Weierstrass point appearing in the support of $D$ has multiplicity 1. If in addition we have $\deg(D) \le g$, then we say $D$ is \emph{reduced}. 
\end{df}

\begin{df}\label{df:norm normalisations} 
For a global field $L$, a \emph{proper set of absolute values for} $L$ is a non-empty multi-set of non-trivial absolute values on $L$ such that the product formula holds. We fix once and for all such a multi-set $M_K$ of absolute values for $K$ such that every Archimedean absolute value $\nu$ comes from a embedding of $K$ into $\mathbb{C}$ with the standard absolute value. Given a finite extension $L/K$, we fix a proper multi-set of absolute values $M_L$ for $L$ by requiring that for all absolute values $\nu \in M_L$, the restriction of $\nu$ to $K$ lies in $M_K$. We denote by $M_L^0$ the sub-multi-set of non-Archimedean absolute values and $M_L^\infty$ the sub-multi-set of Archimedean absolute values. 
\end{df}

\newcommand{\base}{B}
\begin{df}
Given a global field $L$, we define the curve $\base_L$ to be the unique normal integral scheme of dimension 1 with field of rational functions $L$ and such that $\base_L$ is proper over $\on{Spec} \zz$. For example, if $L$ is a number field then $\base_L$ is the spectrum of the ring of integers of $L$. 


\end{df}


\section{Non-Archimedean results}



\subsection{Defining metrics}
\label{sec:metrics}

\newcommand{\rrr}{\mathbb{R}}
\renewcommand{\aa}{\mathbb{A}}

\begin{df}\label{df:non}
For each absolute value $\nu \in M_K$, we fix $(K_\nu\algcl,\abs{-}_{\nu})$ to be an algebraic closure of the completion $K_\nu$ together with the absolute value which restricts to $\nu$ on $K \subset K_\nu\algcl$. For non-Archimedean absolute values $\nu$ we define 
\begin{equation*}
\dd_\nu: C(K_\nu\algcl) \times C(K_\nu\algcl) \rightarrow \mathbb{R}_{\ge 0}
\end{equation*}
by
\begin{equation*}
\begin{split}
&\dd_\nu((X_p:S_p:Y_p),(X_q:S_q:Y_q)) \\
& = \left\{ \begin{array}{ll}
	\max\left( \abs{x_p - x_q}_{\nu}, \abs{y_p^{g+1} - y_q^{g+1}}_{\nu}\right)  & \text{if } \abs{X_p}_{\nu} \le \abs{S_p}_{\nu} \text{ and } \abs{X_q}_{\nu} \le \abs{S_q}_{\nu} \\
	\max\left( \abs{s_p - s_q}_{\nu}, \abs{{y'_p}^{g+1} - {y'_q}^{g+1}}_{\nu}\right)  & \text{if } \abs{X_p}_{\nu} \ge \abs{S_p}_{\nu} \text{ and } \abs{X_q}_{\nu} \ge \abs{S_q}_{\nu} \\
	1 & \text{otherwise}\\
\end{array} \right.\\
\end{split}
\end{equation*}
(here as always $x_p = X_p/S_p $ etc). 
\end{df}


\begin{proposition}
\label{prop:is_metric}
For each $\nu \in M^0_K$, $\dd = \dd_\nu$ is a metric on $C(K_\nu\algcl)$. Moreover, for each such $\nu$, we have $\dd_\nu(p,q) \le 1$ for all $p$ and $q$. 
\end{proposition}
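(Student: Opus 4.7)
The plan is to verify the three metric axioms together with the bound $\dd_\nu(p,q)\le 1$ by a case analysis based on which of the two affine charts $U_1 = \{\abs{x}\le\abs{s}\}$ and $U_2 = \{\abs{x}\ge\abs{s}\}$ of $\p(1,1,g+1)$ each point lies in. Symmetry is immediate from the definition, so the work lies entirely in the other axioms, and I plan to actually prove the stronger ultrametric form of the triangle inequality.

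First I would establish $\dd_\nu(p,q)\le 1$. In the ``otherwise'' branch this is true by fiat. If instead both points lie in $U_1$, so that $\abs{X_p},\abs{X_q}\le 1$, the non-Archimedean inequality gives $\abs{X_p-X_q}\le 1$ directly. For the $Y$-coordinate the curve equation $Y^2 = f(X,1)$, together with the integrality of the $f_i$ (which forces $\abs{f_i}_\nu\le 1$), implies $\abs{Y_p}^2 \le \max_i \abs{f_i}\abs{X_p}^i \le 1$ and likewise for $q$, whence $\abs{Y_p-Y_q}\le 1$. The chart $U_2$ is handled symmetrically. Non-degeneracy then follows quickly: if $\dd_\nu(p,q)=0$ the points cannot sit in the ``otherwise'' branch (where $\dd_\nu=1$), so they share a chart and agree in all affine coordinates of that chart, forcing $p=q$; the converse is trivial.

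For the ultrametric inequality $\dd_\nu(p,r)\le\max(\dd_\nu(p,q),\dd_\nu(q,r))$, I classify each point of $C(K_\nu\algcl)$ by \emph{type}: type I if $\abs{x}<\abs{s}$, type II if $\abs{x}>\abs{s}$, and type B if $\abs{x}=\abs{s}$. The inequality is automatic unless both $\dd_\nu(p,q)<1$ and $\dd_\nu(q,r)<1$, for otherwise the right-hand side equals $1$ and the left-hand side is already at most $1$. If all three points lie in a single chart, the ultrametric behaviour of the sup of non-Archimedean absolute values on that chart closes the argument. Otherwise, up to swapping, $p,q\in U_1$ and $q,r\in U_2$ with no common chart for $\{p,q,r\}$, which forces $p$ to be of type I, $r$ of type II and $q$ of type B. Then $\dd_\nu(p,r)=1$ by the otherwise branch, while $\abs{X_p}<1=\abs{X_q}$ together with the identity $\abs{a-b}=\max(\abs{a},\abs{b})$ valid when $\abs{a}\ne\abs{b}$ gives $\abs{X_p-X_q}=1$, so $\dd_\nu(p,q)=1$, contradicting our assumption that both summands are $<1$.

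The main obstacle is bookkeeping rather than conceptual: one must track carefully which triples of types are compatible with the chart memberships. Once one notices that any transition between the two charts must pass through a type B point and that a type B point is at distance exactly $1$ from any type I or type II point, the ultrametric inequality, and hence the ordinary triangle inequality, falls out of the sup-norm ultrametric on each chart.
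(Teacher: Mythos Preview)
Your proof is correct and follows essentially the same strategy as the paper's: first bound $\dd_\nu\le 1$ using the curve equation together with the integrality of the $f_i$, then dispatch the triangle inequality by a case analysis on which chart each point lies in, the key point being that a strict change of chart forces distance~$1$. Your organization via the type I/II/B trichotomy and your direct proof of the ultrametric inequality are a mild streamlining of the paper's four-case argument for the ordinary triangle inequality, but the underlying idea is the same.
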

\begin{proof} We omit the subscripts $\nu$ from the absolute values. We begin by observing that if $(X:S:Y) \in C(K_\nu\algcl)$ then
\begin{equation*} 
\abs{X} \le \abs{S} \implies \abs{Y} \le  \abs{S}^{g+1} \; \text{and} \; \abs{X} > \abs{S} \implies \abs{Y} \le  \abs{X}^{g+1}. 
\end{equation*}
 Combining this with the fact that $\abs{-}$ is non-Archimedean, we see for all $p$, $q \in C(K_\nu\algcl)$ that $\dd(p,q) \le  1$. 

For showing that $\dd$ is a metric, only the triangle inequality is non-obvious. Let $p = (X_p,S_p,Y_p)$, $q = (X_q,S_q,Y_q)$ and $r = (X_r,S_r,Y_r)$. Suppose firstly that $\abs{X_p} \le \abs{S_p}$, $\abs{X_q} \le \abs{S_q}$ and $\abs{X_r} \le \abs{S_r}$. Then
\begin{equation*}
\begin{split}
& \dd(p,q) + \dd(q,r) \\
& = \max\left(\abs{x_p - x_q},\abs{y_p^{g+1}- y_q^{g+1}}\right) + \max\left(\abs{x_q - x_r},\abs{{y_q^{g+1}} - {y_r^{g+1}}}\right)\\
& \ge \max\left(\abs{{x_p} - {x_q}} + \abs{{x_q} - {x_r}} ,\abs{{y_p^{g+1}}- {y_q^{g+1}}} + \abs{{y_q^{g+1}} - {y_r^{g+1}}}\right) \\
& \ge \dd(p,r).\\
\end{split}
\end{equation*}
The other cases are similar. 
\end{proof}
%

\subsection{A simple formula for the distance function in a special case}

Here we give a simple bound on the logarithm of the distance between two points $p$ and $w$ on $C$ where $w$ is a Weierstrass point. This will be needed in Section \ref{sec:refined_naive}. 

\begin{df}\label{def_lambda}
We write $W$ for the set of Weierstrass point of $C$ (over $K\algcl$). We assume that $C$ has no Weierstrass point with $X$-coordinate zero (cf. Assumption \ref{assumptions on C}). Let $\nu \in M^0_K$. We define $\lambda_\nu$ to be the smallest real number $\ge 1$ such that the following conditions hold.
\begin{itemize}
\item For all Weierstrass points $w \in W$ with  $w \neq \infty$, we have $1/\lambda_\nu \le \abs{x_w}_\nu \le \lambda_\nu$.
\item For all pairs of Weierstrass points  $w$, $w' \in W\setminus\{\infty\}$ with $w \neq w'$ we have $1/\lambda_\nu \le \abs{x_w - x_{w'}}_\nu \le \lambda_\nu.$
\item We have $1/\lambda_\nu \le \abs{f_{2g+1}}_\nu \le \lambda_\nu$, where $f_{2g+1}$ is the leading coefficient of the defining polynomial $f$ of the curve $C$. 
\end{itemize}
\end{df}
Note that $\lambda_\nu = 1$ for all but finitely many $\nu$.

\begin{lemma}
\label{lem:GCD_non_Arch}Let $L/K$ be a finite extension, and let $p$, $w \in C(L)$ with $p \neq w$ be such that $s_p \ne 0$ and $w$ is a Weierstrass point with $s_w \ne 0$. Let $\nu$ be a non-Archimedean absolute value of $L$ extending an absolute value $\nu'$ of $K$. We have
\begin{equation*}
 -\log(\dd_\nu(p,w)) \le \frac{1}{2}\log^+\abs{x_p-x_w}^{-1}_\nu  + (2g+3/2)\log\lambda_{\nu'}. 
\end{equation*}
\end{lemma}
\begin{proof}
%
The formula we must show is equivalent to (at this point we drop the subscripts $\nu$ and $\nu'$)
\begin{equation}
\label{eqn:aim1}
 \dd(p,w)^2 \ge  \min(\abs{x_p - x_w}, 1)/\lambda^{4g+3}. 
\end{equation}
The proof of this inequality falls into a number of cases depending on the valuations of $x_p$, $x_w$ etc. We will only give the details of the case 
\begin{equation*}
1 < \abs{x_w}, \;\;\;\;\; 1< \abs{x_p} \le \lambda. 
\end{equation*}
In this case, we have
\begin{equation*}
\begin{split}
 \dd(p,w) ^2 & =\abs{x_p - x_w} \max\left(\frac{\abs{x_p - x_w}}{\abs{x_p}^2\abs{x_w}^2}, \frac{ \abs{f_{2g+1}}\prod_{w' \in W \setminus\{w, \infty\}}\abs{x_p - x_{w'} }}{  \abs{x_p}^{2g+2}  }     \right)\\
& \ge \frac{ \abs{x_p - x_w}}{\lambda^{2g+2}} \max\left(\abs{x_p - x_w}, \abs{f_{2g+1}}\prod_{w' \in W \setminus\{w, \infty\}}\abs{x_p - x_{w'} }\right). \\
\end{split}
 \end{equation*}
 Now suppose that $\abs{x_p - x_w} < \lambda$ and 
 \begin{equation*}
 \abs{f_{2g+1}}\prod_{w' \in W \setminus\{w, \infty\}}\abs{x_p - x_{w'} } < 1/\lambda^{2g+1}. 
 \end{equation*}
 Then there exists $w_0 \in W \setminus \{w, \infty\}$ such that $\abs{x_{w_0} - x_p} < 1/\lambda$, so by the strong triangle inequality we have
 \begin{equation*}
 \abs{x_w - x_{w_0}} \le \max(\abs{x_w - x_p}, \abs{x_p - x_{w_0}}) < 1/\lambda, 
 \end{equation*}
 a contradiction. Hence 
 \begin{equation*}
 \max\left(\abs{x_p - x_w}, \abs{f_{2g+1}}\prod_{w' \in W \setminus\{w, \infty\}}\abs{x_p - x_{w'} }\right) \ge 1/\lambda^{2g+1}, 
 \end{equation*}
 and Equation \eqref{eqn:aim1} follows.  
 \end{proof}

\subsection{Local N\'eron pairings in the non-Archimedean case}\label{sec:LNP}
\newcommand{\NLP}[1]{[ #1 ]}
We summarise the construction of the local N\'eron pairing at a non-Archimedean place from \cite[IV, \S1]{lang1988introduction}, where more details can be found. This pairing will play a crucial role in allowing us to compare our `distance' function $\dd_\nu$ to the local height pairing at $\nu$.

 Given an absolute value $\nu$ of $K$, we write $\on{Div}^0(C_{K_\nu})$ for the group of degree-zero divisors on the base change of $C$ to the completion of $K$ at $\nu$. The \emph{local N\'eron pairing} at $\nu$ is a biadditive map
\begin{equation*}
\NLP{-,-}_\nu: \left\{(D,E) \in \on{Div}^0(C_{K_\nu}) \times \on{Div}^0(C_{K_\nu}) | \on{supp}(D) \cap \on{supp}(E) = \emptyset\right\} \rightarrow \mathbb{R}. 
\end{equation*}
Its definition depends on whether $\nu$ is an Archimedean or non-Archimedean absolute value; the definition in the Archimedean case will be given in Section \ref{sec:NLP_arch}. 

Let $\nu$ be a non-Archimedean absolute value. Write $\oo_{K_\nu}$ for the ring of integers of the completion $K_\nu$. Let $\cc = \cc_{\oo_{K_\nu}}$ be a proper, flat, regular model of $C$ over $\oo_{K_v}$. We write $\iota\uv$ for the (rational-valued) intersection pairing between divisors over $\nu$ (as defined in \cite[IV, \S1, page 72]{lang1988introduction}). Let $D$ and $E$ be elements of $\on{Div}^0(C_{K_\nu})$ with disjoint support. We extend $D$ and $E$ to horizontal divisors $\overline{D}$ and $\overline{E}$ on $\cc$. Write $\mathbb{Q}\on{FDiv}(C_{K_\nu})$ for the group of $\mathbb{Q}$-divisors on $\cc$ supported on the special fibre $\cc_\nu$. We define a map (cf. \cite[III, \S 3]{lang1988introduction})
\begin{equation*}
\Phi:  \on{Div}^0(C_{K_\nu}) \rightarrow \frac{\mathbb{Q}\on{FDiv}(C_{K_\nu})}{\mathbb{Q}(\cc_\nu)}
\end{equation*}
by requiring that for all fibral divisors $Y \in \on{FDiv}(C_{K_\nu})$, we have
\begin{equation*}
\intersect{Y}{\overline{D} + \Phi(D)} = 0. 
\end{equation*}
Then define the local N\'eron pairing by
\begin{equation*}
\NLP{D,E}_\nu = \log (\#\kappa) \intersect{\overline{E}}{\overline{D} + \Phi(D)}, 
\end{equation*}
where $\kappa$ is the residue field at $\nu$. 


\begin{proposition}
The local N\'eron pairing at a non-Archimedean absolute value $\nu$ is independent of the choice of regular model $\cc_{\oo_{K_v}}$. 
\end{proposition}
\begin{proof}
Combine Theorem 5.1 and Theorem 5.2 of \cite[III]{lang1988introduction}. 
\end{proof}

\subsection{Comparison of the metric and the N\'eron pairing}
The main aim of this section is to prove the following result: 
\begin{theorem}\label{thm:non-Arch}
Given a non-Archimedean absolute value $\nu \in M_K^0$, there exists an explicitly computable constant $\mathscr{B}_\nu$ with the following property: 

Let $D = D_1 - D_2$ and $E = E_1 - E_2$ be differences of reduced divisors on $C$ with no common points in their supports, and assume that $D$ and $E$ both have degree zero. 
Let $L$ denote the minimal field extension of $K_\nu$ such that $D$ and $E$ are pointwise rational over $L$, and over $L$ write $D = \sum_i d_ip_i$, $E = \sum_j e_jq_j$, with $d_i$, $e_j \in \mathbb{Z}$ and $p_i$, $q_j \in C(L)$. Recall from Section \ref{sec:LNP} that $\NLP{D,E}_\nu$ denotes the local N\'eron pairing of $D$ and $E$ at $\nu$. Then
\begin{equation*}
\abs{ \NLP{D,E}_\nu - \sum_{i,j}d_ie_j\log\left(\frac{1}{\dd_\nu(p_i,q_j)} \right) } \le \mathscr{B}_\nu.
\end{equation*}
 Moreover, if $C$ has a smooth proper model over $\nu$, then we may take $\mathscr{B}_\nu = 0$. 
\end{theorem}
\noindent The proof of this result is postponed to the end of this section. 

For the remainder of this section we fix a non-Archimedean absolute value $\nu \in M^0_K$. Write $\cc_1$ for the Zariski closure of $C: Y^2 = F(X,S)$ in $\p_{\oo_{K_\nu}}(1,1,g+1)$. A result of Hironaka, contained in his appendix to \cite{MR775681} (pages 102 and 105) gives us an algorithm to resolve the singularities of $\cc_1$ by a sequence of blowups at closed points and along smooth curves (the latter replacing the normalisations used in Lipman's algorithm \cite{Lipman}); we observe that $\cc_1$ may locally be embedded in $\p^2_{\oo_{K_\nu}}$, and so Hironaka's result can be applied. We fix once and for all a choice of resolution $\cc$ of $\cc_1$ using this algorithm of Hironaka - thus we fix both the model $\cc$ and the sequence of blowups at smooth centres used to obtain it.

We begin by bounding the function $\Phi$. Let $F$ denote the free abelian group generated by prime divisors supported on the special fibre of $\cc$ over $\nu$, and let $V$ denote the finite-dimensional $\qq$-vector space obtained by tensoring $F$ over $\zz$ with $\qq$. Let $M: V \times V \rightarrow \qq$ be the map induced by tensoring the restriction of the intersection pairing on $\cc$ to its special fibre with $\qq$. Then $V$ has a canonical basis of fibral prime divisors, so we may confuse $M$ with its matrix in this basis. Call the basis vectors $Y_1 \ldots Y_n$; we use the same labels for the corresponding fibral prime divisors. 

\begin{lemma}\label{bounding Phi}
Let $M^+$ denote the Moore-Penrose pseudo-inverse (see \cite{moore1920}, \cite{MR0069793}) of $M$, let $m_-$ denote the infimum of the entries of $M^+$ and $m_+$ their supremum. 
Let $D = D^+ - D^-$ and $E = E^+ - E^-$ be differences of reduced divisors on $C$ with no common points in their supports, and assume that $D$ and $E$ both have degree zero. Then
\begin{equation*}
\abs{\intersect{\Phi(D)}{\overline{E}}} \le 2g^2(m_+ - m_-). 
\end{equation*}

\end{lemma}
\begin{proof}
For each $1 \le i \le n$, set
\begin{equation*}
\begin{split}
d_i^+ = \intersect{\overline{D}^+}{Y_i}, \; & \; d_i^- = \intersect{\overline{D}^-}{Y_i}, \\
e_i^+ = \intersect{\overline{E}^+}{Y_i}, \; & \; e_i^- = \intersect{\overline{E}^-}{Y_i}, 
\end{split}
\end{equation*}
and note that all $d_i^{\pm}$ and $e_i^{\pm}$ are non-negative. Then for each $i$ set 
\begin{equation*}
d_i = d^+_i - d^-_i, \;\; e_i = e^+_i - e^-_i, 
\end{equation*}
and define vectors in $V$ by 
\begin{equation*}
\begin{split}
& d = (d_i)_i, \;  d^+ = (d_i^+)_i, \; d^- = (d_i^-)_i, \\ & e = (e_i)_i, \;  e^+ = (e_i^+)_i, \; e^- = (e_i^-)_i. 
\end{split}
\end{equation*}

 Now by definition of $\Phi$ we have that for all vectors $v \in V$:
\begin{equation*}
v \cdot d^T + v \cdot M \cdot \Phi(D)^T = 0, 
\end{equation*}
and hence that 
\begin{equation*}
d^T = -M\cdot \Phi(D)^T.
\end{equation*}

Recall that if for any matrix $A$ the linear system $Ax = b$ has any solutions, then a solution is given by $x = A^+b$ where $A^+$ is the Moore-Penrose pseudo-inverse of $A$. As such, we can take $\Phi(D)$ to be $-d \cdot \left(M^+\right)^T $, and so we find
\begin{equation*}
\intersect{\Phi(D)}{\overline{E}} = - d \cdot \left(M^+\right)^T \cdot e^T. 
\end{equation*}
Expanding out, we find
\begin{equation*}
\begin{split}
\intersect{\Phi(D)}{\overline{E}} & = - d^+ \cdot \left(M^+\right)^T \cdot (e^+)^T + d^+ \cdot \left(M^+\right)^T \cdot (e^-)^T \\
& \;\;\;\;\;\;\;\; + d^- \cdot \left(M^+\right)^T \cdot (e^+)^T - d^- \cdot \left(M^+\right)^T \cdot (e^-)^T.  
\end{split}
\end{equation*}
We will bound each of these four terms. 

Write $\pi$ for a uniformiser in $\oo_K$ at $\nu$ (so $\nu(\pi) = 1$). Write the divisor of $\pi$ on $\cc$ as $\on{div}(\pi) = \sum_i a_iY_i$, where the $a_i$ are integers greater than 0. Then
\begin{equation*}
\sum_i a_id^+_i = \intersect{\overline{D}^+}{\on{div}(\pi)} = \on{deg} D^+ \le g, 
\end{equation*}
(and similarly for $D^-$ and $E^{\pm}$), the second equality holding by \cite[II, Proposition 2.5]{lang1988introduction}. From this, we see that each $d_i^+ \ge 0$ and $\sum_i d_i^+ \le g$ (and similarly for $d_i^-$ and $e_i^{\pm}$). Hence we find that
\begin{equation*}
\begin{split}
&-g^2m^+\le -d^+(M^+)^T(e^+)^T \le -g^2m^-, \\
&g^2m^-\le d^+(M^+)^T(e^-)^T \le g^2m^+, \\
&g^2m^-\le d^-(M^+)^T(e^+)^T \le g^2m^+, \\
&-g^2m^+\le -d^-(M^+)^T(e^-)^T \le -g^2m^-, \\
\end{split}
\end{equation*}
from which the result follows. 
\end{proof}


\label{sec:Non-Archimedean II: local comparison of metrics and intersection pairings}

We have a chosen resolution $\cc = \cc_{K_\nu}$ (by blowups at smooth centres) of the singularities of the closure $\cc_1$ of $C$ in weighted projective space over $\oo_{K_\nu}$.  
Let $b_\nu$ denote the longest length of a chain of blowups at smooth centres involved in obtaining this resolution (one blowup is considered to follow another if the centre of one blowup is contained in the exceptional locus of the previous one). Note that $b_\nu = 0$ if $\cc_1$ is regular. 

For the remainder of this section, let $D$ and $E$ be effective divisors on $C$ with disjoint support, of degrees $d$ and $e$ respectively. Let $L_\nu/K_\nu$ be the minimal finite extension (of degree $m$ with residue field $l$) such that $D$ and $E$ are both pointwise rational over $L_\nu$. Write $D = \sum_{i=1}^d p_i$ and $E = \sum_{i=1}^e q_i$, and write $\overline{D}$ and $\overline{E}$ for the Zariski closures of $D$ and $E$ respectively on the regular model $\cc_{K_\nu}$ over $\oo_{K_\nu}$ (more precisely, take closures of the prime divisors in the supports of $D$ and $E$, then define $\overline{D}$ and $\overline{E}$ to be appropriate linear combinations of these new prime divisors). Write $\omega$ for the maximal ideal of $\oo_{L_\nu}$. 

\begin{proposition}
\label{prop:finite_comp}
We have
\begin{equation*}
-\log(\#\kappa(\nu)) b_\nu de \le \log(\#\kappa(\nu))\intersect{\overline{D}}{\overline{E}} -\log\left(\frac{1}{\prod_{i,j}\dd(p_i,q_j)}\right) \le 0, 
\end{equation*}
where $\kappa(\nu)$ is the residue field at $\nu$. 
\end{proposition}
\noindent
The proof of Proposition \ref{prop:finite_comp} may be found after Lemma \ref{lema:comp_B}. To avoid an excess of notation, we will from now on drop the subscript $\nu$ from the fields and models we are considering, since we will exclusively be working locally at $\nu$ and places dividing it for the remainder of this section. 

\begin{lemma}
\label{lema:comp_A}
Let $p$, $q \in C(L)$ with $p \neq q$. Write 
\begin{equation*}
I_{p,q} \defeq \sum_{\Omega | \omega} \log(\#\kappa(\Omega))\length_{\oo_L}\left(\frac{\oo_{\cc_1 \times_{\oo_K} \oo_L, \Omega}}{I_p + I_q}\right),
\end{equation*}
where the sum is over closed points $\Omega$ (with residue field $\kappa(\Omega)$) of $\cc_1\times_{\oo_K} \oo_L$ lying over $\omega$, and $I_p$ and $I_q$ are defining ideal sheaves for the closures $\overline{p}$ and $\overline{q}$ in $\cc_1\times_{\oo_K} \oo_L$ of the images of $p$ and $q$ in $C \times_K L$. 
Then
\begin{equation*}
I_{p,q} = m\log\left(\frac{1}{\dd(p,q)}\right)
\end{equation*}
(recall that $m = [L:K]$). 
\end{lemma}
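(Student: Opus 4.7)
The plan is to unwind both sides of the equality into concrete data about the DVR $\oo_L$ and then match them using the normalisations of Definition \ref{df:norm normalisations}. By the coordinate involution $x \leftrightarrow s$ used in Proposition \ref{prop:is_metric}, it suffices to treat the case $|x_p| \le |s_p|$.

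Suppose first that also $|x_q| \le |s_q|$. Rescale so that $s_p = s_q = 1$; then $X_p, Y_p, X_q, Y_q \in \oo_L$, and the closures of $p$ and $q$ inside the affine chart $\Spec \oo_L[X, Y]/(Y^2 - f(X, 1))$ are cut out by $I_p = (X - X_p, Y - Y_p)$ and $I_q = (X - X_q, Y - Y_q)$. Since
\[
I_p + I_q = (X - X_p,\ Y - Y_p,\ X_p - X_q,\ Y_p - Y_q),
\]
the global quotient is the skyscraper $\oo_L/(X_p - X_q,\, Y_p - Y_q)$, supported at most at one closed point $\Omega$ on the special fibre (the common reduction of $\overline{p}$ and $\overline{q}$, if any). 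Sections of separated morphisms are closed immersions, so $\kappa(\Omega) = l$. Since $\oo_L$ is a DVR, $(X_p - X_q, Y_p - Y_q)$ equals $(\pi_L^n)$ with
\[
n = \min\bigl(v_\omega(X_p - X_q),\, v_\omega(Y_p - Y_q)\bigr),
\]
and hence $I_{p, q} = n \log(\# l)$.

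The remaining strict case $|x_p| < |s_p|$ and $|x_q| > |s_q|$ forces $\overline{p}$ to reduce to a point with $x \equiv 0$ and $\overline{q}$ to one with $s \equiv 0$; these lie in disjoint open charts of the special fibre, so no closed point contributes to the sum and both sides vanish. The boundary sub-case $|x_p| = |s_p|$ is subsumed by the main case, with $X_p$ a unit and $X_q \notin \oo_L$, forcing $n = 0$.

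It remains to match $n \log(\# l)$ with $m \log \dd(p, q)^{-1}$. The extension $|\cdot|$ of $|\cdot|_\nu$ to $L_\nu \subset K_\nu\algcl$ used to define $\dd$ satisfies $|\pi_L| = (\#\kappa(\nu))^{-1/e}$, where $e$ is the ramification index of $L_\nu/K_\nu$; combined with $f$ the residue degree, $m = ef$, and $\# l = (\#\kappa(\nu))^f$, one obtains $m \log \dd(p, q)^{-1} = (m/e)\, n \log\#\kappa(\nu) = nf \log\#\kappa(\nu) = n \log \# l$, as required. The only real obstacle is keeping these three co-existing normalisations on $L$ straight so that the factor $m = [L_\nu : K_\nu]$ cancels cleanly; the ring-theoretic content is just the standard identification of local intersection multiplicity with the valuation of the ideal generated by the coordinate differences.
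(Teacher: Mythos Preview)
Your argument is correct and follows essentially the same route as the paper: reduce by the $x\leftrightarrow s$ symmetry, compute the quotient $\oo_L/(X_p-X_q,\,Y_p-Y_q)$ in the integral affine chart, and match $n\log(\#l)$ with $m\log\dd(p,q)^{-1}$ via the normalisations. One remark on your boundary sub-case $|x_p|=|s_p|$, $|x_q|>|s_q|$: the phrase ``subsumed by the main case'' is misleading since your main case assumed $|x_q|\le|s_q|$, but your conclusion $n=0$ is nonetheless right---in the $S=s/x$ chart one has $|S_p|=1$ and $|S_q|<1$, so $|S_p-S_q|=1$, whence $\dd(p,q)=1$ and the reductions land at distinct closed points, so both sides vanish.
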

\begin{proof}

Write $p = (X_p:S_p:Y_p)$, $q = (X_q:S_q:Y_q)$ with $X_p$, $S_p$, $X_q$, $S_q \in \oo_L$. If $\abs{X_p} < \abs{S_p}$ and $\abs{X_q} > \abs{S_q}$ or vice versa, then $\overline{p}$ and $\overline{q}$ do not meet on the special fibre so $I_{p,q} = 0$, and by definition we see that $\dd(p,q) = 1$. 

 Otherwise, possibly after changing coordinates, we may assume that $p$ and $q$ are of the form $(x_p:1:y_p)$ and $(x_q:1:y_q)$ respectively, for $x_p$, $y_p$, $x_q$, $y_q \in \oo_L$. We may moreover assume that $\overline{p}$ and $\overline{q}$ meet on the special fibre; let $\Omega$ be the closed point where $\overline{p}$ and $\overline{q}$ meet. 
After multiplying the defining equation $F$ of $C$ on the coordinate chart containing $p$ and $q$ by a power of a uniformiser at $\nu$, we may asume $F$ is integral at $\nu$ and is irreducible. We have
\begin{equation*}
\begin{split}
\frac{\oo_{\cc_1 \times_{\oo_K} \oo_L, \Omega}}{I_p + I_q} & \cong \frac{\oo_L[x,y]_{(x,y)}}{(F,x-x_p,y-y_p,x-x_q,y-y_q)}\\
 & \cong \frac{\oo_L}{(x_p-x_q,y_p-y_q)},\\
\end{split}
\end{equation*}
so 
\begin{equation*}
\length_{\oo_L}\left(\frac{\oo_{\cc_1 \times_{\oo_K} \oo_L, \Omega}}{I_p + I_q}\right) = \min\left(\ord_\omega(x_p-x_q), \ord_\omega(y_p-y_q)\right). 
\end{equation*}

Now given $a \in L$, we find
\begin{equation*}
 \log(\#l)\ord_\omega (a) = -m\log\abs{a}, 
\end{equation*}
so
\begin{equation*}
\length_{\oo_L}\left(\frac{\oo_{\cc_1 \times_{\oo_K} \oo_L, \Omega}}{I_p + I_q}\right) = m\frac{\min\left(-\log\abs{x_p-x_q}, -\log\abs{y_p-y_q}\right)}{\log(\#l)}, 
\end{equation*}
and hence
\begin{equation*}
 I_{p,q} = m\min\left(-\log\abs{x_p-x_q}, -\log\abs{y_p-y_q}\right). 
\end{equation*}
Moreover, 
\begin{equation*}
\log(1/\dd(p,q)) = \min\left(-\log\abs{x_p-x_q} , -\log\abs{y_p-y_q}\right), 
\end{equation*}
so we are done. 
\end{proof}
 
\begin{lemma}
\label{lema:comp_C}
Recalling that over $L$ we can write $D = \sum_{i=1}^d p_i$ and $E = \sum_{i=1}^e q_i$, we define $\oo_{\omega_{i,j}}$ to be the local ring at the closed point of $\cc_1 \times_{\oo_K} \oo_L$ where $p_i$ meets $q_j$ if such exists, and the zero ring otherwise. Letting $\mathcal{I}_D$ and $\mathcal{I}_E$ denote the ideal sheaves of the closures of $D$ and $E$ respectively on $\cc_1$, we have
\begin{equation*}
\sum_{i,j} \length_{\oo_L} \left(\frac{\oo_{\omega_{i,j}}}{I_{p_i} + I_{q_i}}\right) = \length_{\oo_L}\left(\frac{\oo_{\cc_1} \otimes_{\oo_K} \oo_L}{(\mathcal{I}_D + \mathcal{I}_E) \otimes_{\oo_K} \oo_L}\right). 
\end{equation*}
The analogous statement on $\cc$ also holds. 
\end{lemma}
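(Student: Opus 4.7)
The plan is to reduce the asserted identity to a local statement at each closed point of the special fibre, and then deduce that local statement from the bilinearity of local intersection multiplicity for effective Cartier divisors with no common component.

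For the localisation, observe that since $D$ and $E$ have disjoint support on the generic fibre, the coherent sheaf
\[
\mathcal{F} := \oo_{\cc_1 \otimes_{\oo_K}\oo_L}/\bigl((\mathcal{I}_D + \mathcal{I}_E)\otimes_{\oo_K}\oo_L\bigr)
\]
has zero-dimensional support, equal to the scheme-theoretic intersection $\overline{D}\cap\overline{E}$ and contained in the special fibre. Hence $\mathcal{F}$ decomposes as the direct sum of its stalks at its finitely many supporting closed points $\Omega$, and additivity of length gives $\length_{\oo_L}(\mathcal{F}) = \sum_\Omega \length_{\oo_L}(\mathcal{F}_\Omega)$. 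Grouping the left-hand sum of the lemma by the common closed point $\Omega = \omega_{i,j}$ (pairs with $\omega_{i,j}$ the unit ideal contribute zero), the lemma reduces to the local identity
\[
\length_{\oo_L}(\mathcal{F}_\Omega) = \sum_{(i,j):\,\omega_{i,j}=\Omega} \length_{\oo_L}\!\left(\frac{\oo_\Omega}{I_{p_i}+I_{q_j}}\right)
\]
at each supporting $\Omega$.

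For the local identity, I would use the product description $(\mathcal{I}_D)_\Omega = \prod_i (I_{p_i})_\Omega$ and $(\mathcal{I}_E)_\Omega = \prod_j (I_{q_j})_\Omega$, valid because $\overline{D}$ and $\overline{E}$ are effective Cartier divisors, namely sums of the distinct horizontal prime divisors $\overline{p_i}$, $\overline{q_j}$ (with factors for $\overline{p_i}\not\ni \Omega$ or $\overline{q_j}\not\ni\Omega$ equal to $\oo_\Omega$ and hence trivial). Substituting, the claim reduces to bilinearity of the length pairing $(f,g) \mapsto \length_{\oo_L}(\oo_\Omega/(f,g))$ on pairs of Cartier divisor equations with no common prime component. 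This is proved by induction on the number of factors, peeling off one factor via the short exact sequence
\[
0 \to \frac{(f_1)}{(f_1) I' + (f_1)\cap J} \to \frac{\oo_\Omega}{(f_1)I'+J} \to \frac{\oo_\Omega}{(f_1)+J} \to 0,
\]
where $f_1$ is a local equation for $\overline{p_1}$, $I' = \prod_{i\ge 2}(f_i)$, $J = \prod_j (g_j)$. Since $f_1$ is a non-zero-divisor modulo $J$ (as $\overline{p_1}$ is distinct from every $\overline{q_j}$), we have $(f_1)\cap J = (f_1) J$, and multiplication by $f_1$ then identifies the kernel term with $\oo_\Omega/(I'+J)$; iterating and then repeating on the $J$-side yields the desired double sum.

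The main obstacle is that $\cc_1$ need not be regular at $\Omega$, so the $\overline{p_i}$ may fail to be Cartier there, invalidating both the product description of $\mathcal{I}_D$ and the regular-sequence identity $(f_1)\cap J = (f_1) J$. I would sidestep this by first proving the analogous statement on the regular model $\cc$, where every horizontal prime divisor is automatically Cartier and the exact-sequence argument above goes through verbatim — this is also the version actually used downstream in the proof of Proposition \ref{prop:finite_comp}. The $\cc_1$ statement can then be recovered by interpreting $\mathcal{I}_D$ on $\cc_1$ as the product of the defining ideals of the prime components of $\overline{D}$ (which agrees with the Cartier ideal wherever $\cc_1$ is regular), so that the product description holds by construction and the bilinearity argument transfers.
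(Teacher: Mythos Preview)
Your proof is correct and follows essentially the same approach as the paper: both arguments decompose $\mathcal{I}_D$ and $\mathcal{I}_E$ via their prime components and deduce the identity from additivity of length in short exact sequences. Your treatment is considerably more detailed than the paper's one-sentence proof, in particular your explicit localisation and your handling of the potential non-regularity of $\cc_1$ (by first establishing the result on the regular model $\cc$, which is the case actually used in Proposition~\ref{prop:finite_comp}) go beyond what the paper spells out.
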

\begin{proof}
We may decompose $\mathcal{I}_D$ and $\mathcal{I}_E$ into iterated extensions of the sheaves $I_{p_i}$ and $I_{q_i}$, whereupon the result follows from additivity of lengths in exact sequences. 
\end{proof}

\newcommand{\ramdeg}{\operatorname{ram.deg}}

\begin{lemma}
\label{lemma:extra_extra}
Let $M$ be a finite length $\oo_K$-module. Then
\begin{equation*}
\length_{\oo_K}(M) \cdot \ramdeg(L/K) = \length_{\oo_L}(M \otimes_{\oo_K} \oo_L). 
\end{equation*}
\end{lemma}
\begin{proof}
Let $M = M_0 \subset M_1 \subset \cdots \subset M_l = 0 $ be a composition series for $M$, so each $M_i/M_{i+1}$ is simple. Since $\oo_K$ is local, we have by \cite[p12]{matsumura_commutative} that
\begin{equation*}
M_i/M_{i+1} \cong \oo_K/\mathfrak{m}_K. 
\end{equation*}
By additivity of lengths, it suffices to show
\begin{equation*}
\length_{\oo_L}\left(\frac{\oo_K}{\mathfrak{m}_K} \otimes_{\oo_K} \oo_L \right) = \ramdeg(L/K), 
\end{equation*}
but this is clear since $\mathfrak{m}_K \cdot \oo_L = \mathfrak{m}_L^{\ramdeg(/K)}$. 
\end{proof}

\begin{lemma}
\label{lema:comp_D}
Let $\mathcal{I}_D$ and $\mathcal{I}_E$ denote the ideal sheaves on $\cc_1$ corresponding to the closures of the divisors $D$ and $E$ respectively. We have:
 \begin{equation*}
\length_{\oo_K}\left(\frac{\oo_{\cc_1}}{\mathcal{I}_D + \mathcal{I}_E}\right) \cdot \ramdeg{L/K} = \length_{\oo_L}\left(\frac{\oo_{\cc_1} \otimes_{\oo_K} \oo_L}{(\mathcal{I}_D + \mathcal{I}_E) \otimes_{\oo_K} \oo_L}\right).
\end{equation*}
The analogous statement on $\cc$ also holds. 
\end{lemma}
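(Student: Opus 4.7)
The plan is to reduce the assertion to an elementary length-comparison statement for extensions of discrete valuation rings. Specifically, I would first prove that for any finite length $\oo_K$-module $M$,
\[
\length_{\oo_L}(M \otimes_{\oo_K} \oo_L) \;=\; \ramdeg{L/K} \cdot \length_{\oo_K}(M),
\]
and then apply this to the global sections of the coherent sheaf $\oo_{\cc_1}/(\mathcal{I}_D + \mathcal{I}_E)$. For this application to make sense, note that because $D$ and $E$ have disjoint supports on the generic fibre, this sheaf is supported at the finitely many closed points in the special fibre where $\overline{D}$ and $\overline{E}$ meet; hence its $\oo_K$-module of global sections is of finite length. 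Furthermore, since $\oo_L$ will be shown to be flat over $\oo_K$, the right-exactness of tensor product identifies $M \otimes_{\oo_K} \oo_L$ with
\[
\HH^0\!\left(\cc_1 \otimes_{\oo_K} \oo_L,\; (\oo_{\cc_1} \otimes_{\oo_K} \oo_L)/(\mathcal{I}_D + \mathcal{I}_E) \otimes_{\oo_K} \oo_L\right),
\]
which is the $\oo_L$-module appearing on the right-hand side of the lemma. The argument on $\cc$ is identical.

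To prove the displayed length identity, I would use a composition series. Note first that $\oo_L$ is a finitely generated torsion-free module over the principal ideal domain $\oo_K$, hence free and in particular flat. Choose a composition series
\[
0 = M_0 \subsetneq M_1 \subsetneq \cdots \subsetneq M_n = M
\]
with each $M_i/M_{i-1} \cong \oo_K/\mathfrak{m}_K$ and $n = \length_{\oo_K}(M)$. Tensoring with the flat $\oo_K$-algebra $\oo_L$ preserves this filtration, and each successive quotient becomes $\oo_L/\mathfrak{m}_K\oo_L$. By the definition of the ramification index, $\mathfrak{m}_K \oo_L = \mathfrak{m}_L^{\ramdeg{L/K}}$, so the filtration of $\oo_L/\mathfrak{m}_K\oo_L$ by powers of $\mathfrak{m}_L$ exhibits it as having $\oo_L$-length exactly $\ramdeg{L/K}$. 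Summing over the $n$ steps of the composition series and using additivity of length in short exact sequences yields the required identity.

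There is no serious obstacle to this approach; the argument is essentially formal. The only mild subtleties are to check that the coherent sheaf in question has finite length as an $\oo_K$-module (which follows from disjointness of the generic-fibre supports of $D$ and $E$, together with finiteness of residue extensions at closed points of the special fibre) and that $\oo_L$ is flat over $\oo_K$ (which is automatic for a finite extension of discrete valuation rings).
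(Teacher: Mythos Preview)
Your proposal is correct and follows essentially the same route as the paper: reduce to the abstract identity $\length_{\oo_L}(M\otimes_{\oo_K}\oo_L)=\ramdeg(L/K)\cdot\length_{\oo_K}(M)$ for finite-length $\oo_K$-modules, prove it via a composition series with successive quotients $\oo_K/\mathfrak{m}_K$, and use $\mathfrak{m}_K\oo_L=\mathfrak{m}_L^{\ramdeg(L/K)}$. You are slightly more explicit than the paper in justifying flatness of $\oo_L$ and finite length of the quotient sheaf, but the argument is the same.
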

\begin{proof}
Setting $M = \frac{\oo_{\cc_1}}{\mathcal{I}_D + \mathcal{I}_E}$, we have that $M$ is a finite-length $\oo_K$-module, and 
\begin{equation*}
M \times_{\oo_K}\oo_L = \frac{\oo_{\cc_1} \otimes_{\oo_K} \oo_L}{(\mathcal{I}_D + \mathcal{I}_E) \otimes_{\oo_K} \oo_L}. 
\end{equation*}
We are done by Lemma \ref{lemma:extra_extra}. 
\end{proof}

\newcommand{\mm}{\mathfrak{m}}
\newcommand{\olp}{\overline{p}}
\newcommand{\olq}{{\overline{q}}}

\begin{lemma}
\label{lema:comp_B}
Let $\phi: \cc_3 \rightarrow \cc_2$ be one of the blowups involved in obtaining $\cc$ from $\cc_1$. Let $p$, $q \in C(L)$ with $p\neq q$. Then 
\begin{equation*}
 0 \le \length_{\oo_L}\left(\frac{\oo_{\cc_2 \times \oo_L}}{I_{\olp} + I_{\olq}}\right) - \length_{\oo_L}\left(\frac{\oo_{\cc_3 \times \oo_L}}{I_{\olp} + I_{\olq}}\right) \le \ramdeg(L/K). 
\end{equation*}
\end{lemma}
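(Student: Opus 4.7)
The plan is to analyse $\phi$ locally around its centre $Z$, exploiting that $\phi$ is an isomorphism outside the exceptional locus. Consequently both lengths receive identical contributions from closed points outside $\phi^{-1}(Z)$, and the entire difference is concentrated above $Z$; after base change to $\oo_L$ the same remains true, and I may work Zariski-locally around $Z$.

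For the lower bound $0 \le L_2 - L_3$, I would use that the ideal sheaf of the strict transform of $\olp$ on $\cc_3$ always contains the extended ideal $I_{\olp}\cdot\oo_{\cc_3\times\oo_L}$ (which cuts out the total transform), and similarly for $\olq$. This produces a surjection from $\oo_{\cc_3\times\oo_L}/(I_{\olp}+I_{\olq})\cdot\oo_{\cc_3\times\oo_L}$ onto $\oo_{\cc_3\times\oo_L}/(I_{\olp}+I_{\olq})$. Pushing forward along $\phi$, using $\phi_*\oo_{\cc_3}=\oo_{\cc_2}$ and the projection formula, identifies the $\oo_L$-length of the source with $L_2$ once we quotient out the (finitely many) positive-dimensional contributions supported on the exceptional divisor. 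Since a surjection of zero-dimensional sheaves does not increase length, $L_2 \ge L_3$ follows.

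For the upper bound $L_2-L_3\le\ramdeg(L/K)$, I would argue by explicit local computation in each of the two cases produced by Hironaka's algorithm (centre a closed point or a smooth horizontal curve). The guiding heuristic is the classical intersection formula on a regular surface: the decrease in intersection multiplicity coming from a blowup equals the product $m_{\olp}(Z)\cdot m_{\olq}(Z)$ of the multiplicities of $\olp,\olq$ at the centre. Because $\olp$ and $\olq$ are flat closures of distinct sections of $C_L\to\Spec L$, each is locally cut out by a single equation whose $\mathfrak{m}_x$-adic order is $1$, so each multiplicity is at most $1$ over $\oo_K$ and hence the product is bounded by $1$. Base change to $\oo_L$ then multiplies $\oo_K$-lengths by $\ramdeg(L/K)$ via Lemma \ref{lema:comp_D}, producing the claimed upper bound.

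The main obstacle is that $\cc_2$ need not be regular around the centre $Z$, so the classical intersection formula and the bound on the multiplicity of a section are not directly available. To handle this I would factor $\phi$ through the fully resolved regular model $\cc$, which is obtained from $\cc_3$ by further blowups in Hironaka's algorithm. Intersection multiplicities on each of $\cc_2$ and $\cc_3$ can be compared to those on $\cc$ by telescoping across the intermediate blowups, and on the regular model $\cc$ the classical formula does apply. A secondary subtlety is verifying that a section has multiplicity at most $1$ at every closed point through which it passes in a possibly non-regular ambient scheme; this reduces to the observation that a section is locally cut out by an element which is part of a minimal system of generators of $\mathfrak{m}_x$, since it is flat of relative dimension zero over $\oo_K$.
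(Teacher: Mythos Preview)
Your lower-bound idea is on the right track but the execution is shaky. The surjection coming from ``strict-transform ideal contains extended ideal'' is fine, but the pushforward step is not: $\phi_*\oo_{\cc_3}=\oo_{\cc_2}$ requires $\cc_2$ to be normal, which the intermediate surfaces in Hironaka's procedure need not be, and the source of your surjection has infinite length when both sections pass through the centre (its support contains the whole exceptional divisor), so ``quotient out the positive-dimensional contributions'' is not an operation that recovers $L_2$. A clean salvage is to restrict everything to $\overline{p}$ itself: since $\overline{p}\cong\Spec\oo_L$ is regular, $\phi$ restricts to an isomorphism $\overline{p}_3\to\overline{p}$, and your ideal containment, restricted, gives $I_{\overline{q}_3}|_{\overline{p}_3}\supseteq I_{\overline{q}}|_{\overline{p}}$ inside $\oo_L$, whence $L_3\le L_2$.

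The upper bound is where the real gap lies. Telescoping through $\cc$ is vacuous: writing $L_2-L_3=(L_2-L_{\cc})-(L_3-L_{\cc})$ is an identity, not a bound, and the intermediate surfaces between $\cc_3$ and $\cc$ are no more regular than $\cc_2$, so the classical drop formula is unavailable at every step of the telescope, not just the first. More fundamentally, even on a regular surface your multiplicity heuristic would not give the right constant: after base change to $\oo_L$ the centre $Z\times_{\oo_K}\oo_L$ is a \emph{fat} point of length $\ramdeg(L/K)$ (cut out by $(x,y,a)$ with $a$ a uniformiser of $\oo_K$, not of $\oo_L$), and the blowup at a non-reduced centre does not obey the simple product-of-multiplicities rule. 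Your argument that a section has multiplicity $1$ is a statement over $\oo_L$ at the reduced point, which is not the centre being blown up.

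The paper avoids both obstructions by abandoning the surface and passing to the ambient smooth $3$-fold. After an \'etale base change one has $R=\tilde{\oo_K}[[x,y]]$, which remains a regular local ring after tensoring with $\oo_L$; the centre is $(x,y,a)$ or $(x,a)$ with $a$ a uniformiser of $\tilde{\oo_K}$. A section through the centre then has the explicit form $(x-ax_p,\,y-ay_p)$, and one reads off the strict transform on the chart $a\neq0$ by dividing the appropriate coordinates by $a$. The drop in $\min(\ord_{\omega'}(\cdot),\ord_{\omega'}(\cdot))$ is then visibly $\ord_{\omega'}(a)=\ramdeg(L/K)$ for a point blowup and $0$ or $\ord_{\omega'}(a)$ for a curve blowup. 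This explicit calculation is what replaces the unavailable multiplicity formula.
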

\begin{proof}In this proof, we will omit the subscripts  `$\oo_L$' from the lengths, since all lengths will be taken as $\oo_L$-modules. 
 If $\overline{p}$ does not meet $\overline{q}$ on $\cc_2 \times \oo_L$ then both the lengths are zero, so we are done. Otherwise, let $\Omega$ be the closed point on $\cc_2 \times \oo_L$ where $\olp$ meets $\olq$, and let $\alpha$ be the closed point of $\cc_2$ such that $\Omega$ lies over $\alpha$. 

Let $u$, $v$ be local coordinates on the (three-dimensional) ambient space to $\cc_2$ at $\alpha$, and let $R$ denote the completion at $(u,v)$ of the \'etale local ring of the ambient space to $\cc_2$ at $\alpha$. Let $B \subset R$ be the centre of the localisation of $\phi$ at $\alpha$. We have
\begin{equation*}
 R \cong \tilde{\oo}_K[[u,v]]_{(u,v,a)}
\end{equation*}
where $\tilde{\oo}_K$ is the completion of $\oo_K$ and $a$ is a uniformiser in $\tilde{\oo}_K$, and that 
\begin{equation*}
 B = (u,v,a) \;\;\; \text{ or } \;\;\; B = (u,a), 
\end{equation*}
depending on whether we are blowing up a point or a smooth fibral curve. 

Blowups commute with flat base change, and the strict transform of a closed subscheme under a blowup is the corresponding blowup of that closed subscheme (see \cite[Corollary 8.1.17]{liu2006algebraic}), so we can be relaxed with our notation. We may write 
\begin{equation*}
 \begin{split}
p = (u- a u_p, v-a v_p) \;\;\;\;\; q = (u-a u_q, v- a v_q)
 \end{split}
\end{equation*}
where $u_p$, $v_p$, $u_q$ and $v_q$ are in $\oo_L \cdot \tilde{\oo}_K$. 
Setting $\omega'$ to be a uniformiser in the maximal ideal of $\tilde{\oo}_K\cdot \oo_L$, we have
\begin{equation*}
\length\left(\frac{\oo_{\cc_2 \times \oo_L}}{I_p + I_q}\right) = \min\left(\ord_{\omega'} (a u_p - a u_q), \ord_{\omega'} (a v_p - a v_q)\right). 
\end{equation*}
In the case $B = (u,v,a)$ we look at the affine patch of the blowup given by setting $a \ne 0$; the equations for $p$ and $q$ transform into
\begin{equation*}
 p' = (u-u_p, v-v_p) \;\;\; \text{ and } \;\;\; q' = (u-u_q, v-v_q), 
\end{equation*}
so 
\begin{equation*}
\begin{split}
\length\left(\frac{\oo_{\cc_3 \times \oo_L}}{I_p + I_q}\right) & = \min\left(\ord_{\omega'} (u_p -  u_q), \ord_{\omega'} ( v_p -  v_q)\right)\\ 
& = \length\left(\frac{\oo_{\cc_2 \times \oo_L}}{I_p + I_q}\right) - \ord_{\omega'} (a). 
\end{split}
\end{equation*}
In the case $B = (u,a)$ we look again at the affine patch of the blowup given by setting $a \ne 0$; the equations for $p$ and $q$ transform into
\begin{equation*}
 p' = (u-u_p, v- a v_p) \;\;\; \text{ and } \;\;\; q' = (u-u_q, v- a v_q), 
\end{equation*}
so 
\begin{equation*}
\begin{split}
\length\left(\frac{\oo_{\cc_3 \times \oo_L}}{I_p + I_q}\right) & = \min\left(\ord_{\omega'} (u_p -  u_q), \ord_{\omega'} (  a v_p -  a  v_q)\right)\\ 
& = \length\left(\frac{\oo_{\cc_2 \times \oo_L}}{I_p + I_q}\right) - (0 \text{ or } 1)\ord_{\omega'} (a), 
\end{split}
\end{equation*}
so the result follows from the fact that, since $\tilde{\oo}_K$ is unramified over $\oo_K$, we have
\begin{equation*}
 \ord_{\omega'}(a) = \ramdeg(L \cdot \tilde{K}/\tilde{K}) = \ramdeg(L/K). 
\end{equation*}
\end{proof}

\begin{proof}[Proof of Proposition \ref{prop:finite_comp}]
To prove Proposition \ref{prop:finite_comp}, we apply Lemmata \ref{lema:comp_A}, \ref{lema:comp_B}, \ref{lema:comp_C} and \ref{lema:comp_D} in that order to find that there exists $0 \le \beta \le b_\nu de\log(\#\kappa(\nu))$ such that
\begin{equation*}
 \begin{split}
\sum_{i,j} \log\left(\frac{1}{\dd(p_i,q_j)}\right) & = \frac{1}{m}\sum_{i,j}\sum_{\Omega | \nu} \log(\#\kappa(\Omega))\length_{\oo_L}\left(\frac{\oo_{\cc_1 \times_{\oo_K} \oo_L, \Omega}}{I_p + I_q}\right) \\
& = \frac{1}{m}\sum_{i,j}\sum_{\Omega | \nu} \log(\#\kappa(\Omega))\length_{\oo_L}\left(\frac{\oo_{\cc \times_{\oo_K} \oo_L, \Omega}}{I_p + I_q}\right)  + \beta\\
& = \frac{1}{m}\log(\#\kappa(\omega))\length_{\oo_L}\left(\frac{\oo_{\cc \times \oo_L}}{I_D + I_E}\right)  + \beta \\
& = \frac{1}{m}\log(\#\kappa(\omega))\length_{\oo_K}\left(\frac{\oo_{\cc}}{I_D + I_E}\right) \cdot \ramdeg(L/K)  + \beta \\
& = \log(\#\kappa(\nu))\intersect{\overline{D}}{\overline{E}}  + \beta. \\
 \end{split}
\end{equation*}
\end{proof}

\begin{proof}[Proof of Theorem \ref{thm:non-Arch}]
Let $M^+$ be the matrix from Lemma \ref{bounding Phi}, let $m_-$ denote the infimum of the entries of $M^+$ and $m_+$ their supremum. Let $b_\nu$ be the integer appearing in Proposition \ref{prop:finite_comp}. Set 
\begin{equation*}
\mathscr{B}_\nu = \left(2g^2(m_+ - m_-) + g^2 b_\nu \right) \log(\#\kappa(\nu)). 
\end{equation*}
Then the result follows from Lemma \ref{bounding Phi} and Proposition \ref{prop:finite_comp}. 
\end{proof}

\section{Archimedean results}
\label{sec:arch_abs_values}

\subsection{Defining metrics}
\label{sec:Arch}
As in the non-Archimedean setting, we will define a metric and compare the distance between divisors in this metric to the local N\'eron pairing between the divisors (more precisely, between the corresponding points on the Jacobian). 

\begin{df}\label{df:Arch_metric}
For Archimedean absolute values $\nu$ we define 
\begin{equation*}
\dd_\nu: C(K_\nu\algcl) \times C(K_\nu\algcl) \rightarrow \mathbb{R}_{\ge 0}
\end{equation*}
by
\begin{equation*}
\begin{split}
&\dd_\nu((X_p:S_p:Y_p),(X_q:S_q:Y_q)) \\
& = \min\left(1, \max\left( \abs{{x_p} - {x_q}}_{\nu}, \abs{{y_p^{g+1}} - {y_q^{g+1}}}_{\nu}\right),  \max\left( \abs{{s_p} - {s_q}}_{\nu}, \abs{{{y'_p}^{g+1}} - {{y_q'}^{g+1}}}_{\nu}\right)  \right), \\
\end{split}
\end{equation*}
where as always $x_p = X_p/S_p$ etc. 
\end{df}


\subsection{Estimates for the Archimedean distance in a special case}

\newcommand{\field}{K}
In the special case where points $p$ and $q$ in $C(K)$ are related by the hyperelliptic involution, we can easily relate the distance between $p$ and $q$ to the $y$-coordinate of $p$ (we will need this estimate in Section \ref{sec:refined_naive}):
\begin{lemma}\label{lem:archconst}
There exist computable constants $0 < \dtwo < \dthree $ such that for all non-Weierstrass points $p = (X:S:Y) \in C(\field\algcl)$, and for all Archimedean absolute values $\nu\in M^\infty_K$ on $\field$ with their unique extensions to $\field\algcl$, we have
\begin{equation*}
\dtwo \le  \dd_\nu (p, p^-)/ (2 \min(\abs{y}_\nu, \abs{y'}_\nu)) \le \dthree, 
\end{equation*}
where as usual we write $y = Y/S^{g+1}$ and $y' = Y/X^{g+1}$. 
\end{lemma}
\begin{proof}Since $M^\infty_K$ is finite, it is enough to show that such bounds can be found for one $\nu \in M_K^\infty$ at a time. 
Fix an Archimedean absolute value $\nu$. Recall that $\dd_\nu$ is the metric given in Definition \ref{df:Arch_metric}. A brief calculation (considering the two cases $\abs{y}\le\abs{y'}$ and $\abs{y} \ge \abs{y'}$) shows that
$$\frac{\dd_\nu (p, p^-)}{(2 \min(\abs{y}_\nu, \abs{y'}_\nu)) }= \min\left(1, \frac{1}{2\min(\abs{y}_\nu, \abs{y'}_\nu)}   \right). $$
Recall that $C$ is given by 
$$Y^2 = \sum_{i=0}^{2g+2} f_i X^iS^{2g+2-i}, $$
and set $a = \sqrt{\sum_i \abs{f_i}_\nu}$. Then $\abs{X/S}_\nu \le 1$ implies $\abs{y}_\nu \le a$ and $\abs{S/X}_\nu \le 1$ implies $\abs{y'}_\nu \le a$, so we find
$$\min\left(1, \frac{1}{2a}\right) \le \frac{\dd_\nu (p, p^-)}{(2 \min(\abs{y}_\nu, \abs{y'}_\nu))} \le 1. $$
\end{proof}

\subsection{Local N\'eron pairing in the Archimedean case}\label{sec:NLP_arch}
As in the non-Archimedean case, we will make use of the local N\'eron pairing to compare our metric to the local part to the N\'eron-Tate height. We recall in outline the construction of the pairing from \cite{lang1988introduction}, where more details can be found. 

Let $\nu$ be an Archimedean absolute value of $K$. Fix an algebraic closure of $K_\nu$, and view $C_\nu = C(K^{\on{alg}}_\nu)$ as a compact connected Riemann surface of positive genus and let $\mu$ denote the canonical (Arakelov) (1,1)-form $\mu$ on $C_\nu$ (as in \cite[II, \S2, page 28]{lang1988introduction}).  We write $G(-,-):C_\nu \times C_\nu \rightarrow \R_{\ge 0}$ for the exponential Green's function on $C_\nu \times C_\nu$  associated to $\mu$, and $\on{gr}$ for its logarithm. We normalise the Green's function to satisfy the following three properties. 

1) $G(p, q)$ is a smooth function on $C_\nu \times C_\nu$ and vanishes only at the diagonal. For a fixed $p \in C_\nu$, an open neighbourhood $U$ of $p$ and a local coordinate $z$ on $U$ centred at $p$, there exists a smooth function $\alpha$ such that for all $q \in U$ with $p \neq q$ we have
\begin{equation*}
\on{gr}(p,q) =\log \abs{z(q)} + \alpha(q).
\end{equation*}

2) For all $p \in C_\nu$ we have $\partial_q \overline{\partial}_q \on{gr}(p,q)^2 = 2 \pi i \mu(q)$ for $q \neq p$. 

3) For all $p \in C_\nu$, we have
\begin{equation*}
\int_{C_\nu} \on{gr}(p,q) \mu(q) = 0. 
\end{equation*}

Write $D = \sum_i a_i p_i$ and $E = \sum_j b_j q_j$ with $a_i$, $b_j \in \mathbb{Z}$ and $p_i$, $q_j \in C_\nu$ (where $D$ and $E$ are assumed to have degree 0 and disjoint support). Then the local N\'eron pairing at $\nu$ is defined by
\begin{equation*}
\NLP{D,E}_\nu = \sum_{i,j} a_i b_j \on{gr}(p_i, q_j). 
\end{equation*}

\subsection{Comparing the metric and the local N\'eron pairing}

Fix an embedding of $K$ into $\mathbb{C}$. Let $\on{gr}$ be the logarithmic Green's function on the Riemann surface $C(\mathbb{C})$ (defined using this embedding) given in Section \ref{sec:NLP_arch}. We have:


\begin{proposition}\label{local_arch}
There exists a constant $c \ge 0$ such that for all pairs of distinct points $p$, $q \in C(\mathbb{C})$, we have
$$\abs{\on{gr}(p,q) + \log \on{d}_\nu(p,q)} \le c .$$
\end{proposition}
\begin{proof}
Let $\Delta$ be the diagonal in the product $C \times_K C$. The Green's function $\on{gr}$ can be taken to be the logarithm of the norm of the canonical section of the line bundle $\mathcal{O}_{C \times C}(\Delta)$ (see \cite[4.10]{Moret-Bailly1985Metriques-permi} for details). We need to show that the functions $\on{gr}(-,-)$ and $\log \on{d}_\nu(-,-)$ differ by a bounded amount. This is easy: both functions are continuous outside the diagonal $\Delta$, and exhibit logarithmic poles along the diagonal (\cite[4.11]{Moret-Bailly1985Metriques-permi}), so their difference is bounded by a compactness argument. 
\end{proof}
The following proposition is the Archimedean analogue of theorem \ref{thm:non-Arch}, except we omit the `explicitly computable'. This makes it much easier to prove. 

\begin{proposition}\label{prop:Arch}
Given an Archimedean absolute value $\nu \in M_K^0$, there exists a constant $\mathscr{B}_\nu$ with the following property: 

Let $D = D_1 - D_2$ and $E = E_1 - E_2$ be differences of reduced divisors on $C$ with no common points in their supports, and assume that $D$ and $E$ both have degree zero. 
Write $D = \sum_i d_ip_i$, $E = \sum_j e_jq_j$, with $d_i$, $e_j \in \mathbb{Z}$ and $p_i$, $q_j \in C(\mathbb{C})$. Recall from Section \ref{sec:LNP} that $\NLP{D,E}_\nu$ denotes the local N\'eron pairing of $D$ and $E$ at $\nu$. Then
\begin{equation*}
\abs{ \NLP{D,E}_\nu - \sum_{i,j}d_ie_j\log\left(\frac{1}{\dd_\nu(p_i,q_j)} \right) } \le \mathscr{B}_\nu.
\end{equation*}

We call such a constant $\mathscr{B}_\nu$ a \emph{height-difference bound at $\nu$}. 
\end{proposition}
\begin{proof}
This follows immediately from the definition of the N\'eron local pairing and proposition \ref{local_arch}. 
\end{proof}

The key result is now:
\begin{theorem}\label{lem:rho}
There exists an algorithm which, given an Archimedean place $\nu$, will compute a height difference bound $\mathscr{B}_\nu$ at $\nu$. 
\end{theorem}
The author is aware of at least $2$ proofs of this result. The first was given in \cite{holmesPhDThesis}; it begins by analysing the case were the points in the support of $D$ and $E$ are not too close together using an explicit formula from \cite{holmes2010canonical} for the Green's function in terms of theta functions, together with explicit bounds on the derivatives of theta functions. The case where some points in the support are close together is handled by a `hands-on' computation of how the Green's function and theta functions behave under linear equivalence of divisors. The proof occupies 33 pages. The second proof was given in a previous version of this paper \cite{first_version_of_this}; it uses Merkl's theorem \cite{couveignes2011computational}, and requires 13 pages. The problem with these approaches is that they will be hard to implement, and more importantly will give extremely large bounds - with Merkl's theorem terms like $\exp(4800g^2)$ appear in the difference between the exponential heights, making this entirely impractical for calculations. Problems with methods coming from numerical analysis are discussed in the introduction. 

What is needed is an algorithm which is practical to implement and gives small, rigorous bounds. It seems that at the time of writing no such algorithm is known (though note that Silverman \cite{MR1035944} essentially gives an explicit value for $\mathscr{B}_\nu$ in the case where $g = 1$). Since the existing algorithms are lengthy to write down and have no practical application (due to the size of the bounds they produce), we will not describe them in detail here. 

\section{The first na\"ive height}
\label{sec:first_naive}

\newcommand{\Hnaive}{\operatorname{H}^\text{n}}
\newcommand{\hnaive}{\operatorname{h}^\text{n}}
\newcommand{\ipone}{E}
\newcommand{\iptwo}{E'}
\begin{assumption}\label{assumptions on C}
In this section we will for the first time require that $\#M_K^\infty \le 1$ (so $\on{char}K >0$ or $K = \mathbb{Q}$). We also assume that the curve $C$ has a rational Weierstrass point, and we move a rational Weierstrass point of $C$ to lie over $s=0$, so that the affine equation for $C$ has degree $2g+1$. We denote this point by $\infty$. We further assume that there is no Weierstrass point $d$ with $X_d = 0$. None of these assumptions are essential, but they simplify the exposition. 
\end{assumption}

\begin{remark}\label{rk:inf_ass}
The assumption that $\#M_K^\infty \le 1$ is to ensure the existence of divisors $E$ and $E'$ in the next definition. To treat the general case, one may have to use several pairs of divisors $E$ and $E'$, one for each Archimedean place of $K$. The comparisons of the heights will then become more involved. 
\end{remark}

\begin{df}
\label{def:definition_of_naive_height}
If K has positive characteristic, set $\mu = 1$. Otherwise, let $\mu := \frac{1}{3}\min_{w, w'} \dd_\nu(w,w')$ where the minimum is over pairs of distinct Weierstrass points of $C$, and $\nu$ is the Archimedean absolute value. 

Given a rational point $p$ of the Jacobian $\on{Jac}_C$ of $C$, write $p = [D - \deg(D) \infty]$ where $D$ is a reduced divisor on $C$ such that the coefficient of $\infty$ in $D$ is zero (such a $D$ is unique). If the support of $D$ contains any Weierstrass points, replace $D$ by the divisor obtained by subtracting them off. Let $d$ denote the degree of the resulting divisor $D$. 

Choose once and for all a pair of degree-$d$ effective divisors $\ipone$ and $\iptwo$ with disjoint support, supported on Weierstrass points away from $\infty$, such that no point in the support of $D$ is within Archimedean distance $\mu$ of any point in the support of $\ipone$ or $\iptwo$. The existence of such divisors is clear since there are $2g+1$ Weierstrass points away from $\infty$ and reduced divisors have degree at most $g$. 

Let $D^-$ denote the image of $D$ under the hyperelliptic involution. Let $L/K$ denote the minimal field extension over which $D$, $\ipone$ and $\iptwo$ are pointwise rational. Over $L$, we write $D = \sum_i d_i$, $\ipone = \sum_i q_i$ and $\iptwo = \sum_i q_i'$. Given an absolute value $\nu$ of $L$, define
 \begin{equation*}
\dd_\nu(D-\ipone,D^- - \iptwo) := \prod_{i,j} \frac{\dd_\nu(p_i,p_j^-) \dd_\nu(q_i,q_j')}{\dd_\nu(p_i, q_j') \dd_\nu(p_j^-,q_i)}. 
\end{equation*}

Define the height $\Hnaive: \on{Jac}_C(K) \rightarrow \mathbb{R}_{\ge 1}$ by 
\begin{equation}\label{eq:asd}
\Hnaive(p) = \left( \prod_{\nu \in M_L} \frac{1}{\dd_\nu(D-\ipone,D^- - \iptwo)}\right)^{\frac{1}{[L:K]}}. 
\end{equation}
We define a logarithmic na\"ive height by $\hnaive(p) = \log(\Hnaive(p))$. 
\end{df}
\noindent Note that $\dd_\nu(D-\ipone,D^- - \iptwo) = 1$ for all but finitely many absolute values $\nu$, and so the product in Equation \eqref{eq:asd} is finite.

Write $\alpha\colon \operatorname{Div}^0(C) \rightarrow \on{Jac}_C(K)$ for the usual map. The crucial result which allows us to relate our na\"ive height to the N\'eron-Tate height is:
\begin{theorem}[Faltings, Hriljac]\label{FH}
Let $D_1$ and $D_2$ be two divisors of degree zero on $C$ with disjoint support. Suppose $D_1$ is linearly equivalent to $D_2$. Then
\begin{equation*}
\sum_{\nu \in M_K}\NLP{D_1,D_2}_\nu = -\hat{\hh}(\alpha(D_1))
\end{equation*}
where $\hat{\hh}$ denotes the N\'eron-Tate height function with respect to twice the theta-divisor. 
\end{theorem}
\begin{proof}
See \cite{faltings1984calculus} or \cite{hriljac_thesis} for the case where $K$ is a number field. The same proof works when $K$ is a global field as has been remarked by a number of authors, see e.g. \cite{SteffenComputation}. 
\end{proof}

\begin{theorem}
\label{thm:tilde_height_diff}
There exists a computable constant $\done \ge 0$  such that for all $p \in \on{Jac}_C(K)$ we have 
\begin{equation*}
\abs{\hat{\hh}(p) - \hnaive(p)} \le\done. 
\end{equation*}
\end{theorem}
\begin{proof}

For each absolute value $\nu$ of $K$, let $\mathscr{B}_\nu$ be the real number defined in Theorem \ref{thm:non-Arch} for $\nu$ non-Archimedean, and in Proposition \ref{prop:Arch} for $\nu$ Archimedean. Note that $\mathscr{B}_\nu= 0$ for $\nu$ a non-Archimedean absolute value of good reduction for $C$. Define 
\begin{equation*}
\done := \sum_{\nu \in M_K} \mathscr{B}_\nu.  
\end{equation*}
Let $D$, $D^-$, $\ipone$, $\iptwo$ be the divisors associated to $p$ as in Definition \ref{def:definition_of_naive_height}. Recall from Section \ref{sec:LNP} that $[-,-]_\nu$ denotes the local N\'eron pairing at $\nu$ between two divisors of degree zero and with disjoint supports. Then by Theorem \ref{thm:non-Arch} and Proposition \ref{prop:Arch} we have that
\begin{equation*}
\abs{\sum_{\nu \in M_K} \NLP{D- \ipone,D^- - \iptwo}_\nu - \hnaive(p)} \le \done. 
\end{equation*}
Now we will use Theorem \ref{FH} to compare $\sum_{\nu \in M_K} \NLP{D- \ipone,D^- - \iptwo}_\nu$ to $\hat{\hh}(p)$; in fact, we will show they are equal. First, a little more notation: write 
\begin{equation*}
[-,-] = \sum_{\nu \in M_K}[-,-]_\nu, 
\end{equation*}
(the sum of the local N\'eron pairings). This pairing is a-priori only defined for degree-zero divisors with disjoint support, but it respects linear equivalence by \cite[IV, Theorem 1.1]{lang1988introduction}, and hence extends to a bilinear pairing on the whole of $\on{Div}^0(C)$, and moreover factors via $\on{Jac}_C(K)$. Write 
\begin{equation*}
\Span{\Span{-,-}}\colon \on{Jac}_C(K) \times \on{Jac}_C(K) \rightarrow \mathbb{R}
\end{equation*}
for the N\'eron-Tate height pairing (so $\Span{\Span{x,x}} = -\hat{\hh}(x)$ for all $x \in \on{Jac}_C(K)$). Theorem \ref{FH} then tells us that 
\begin{equation*}
[F,F] = -\Span{\Span{\alpha(F), \alpha(F)}}
\end{equation*}
for every degree-zero divisor $F$ on $C$, but since a bilinear form is determined by its restriction to the diagonal we find that
\begin{equation*}
[F,F'] = -\Span{\Span{\alpha(F), \alpha(F')}}
\end{equation*}
for every pair $F$, $F'$ of degree-zero divisors on $C$.

Write $\tilde{p} = \alpha(D-E)$, and $q = \alpha(D^-  - E')$. Then there exist 2-torsion points $\sigma$, $\tau \in \on{Jac}_C(K)$ such that 
\begin{equation*}
\tilde{p} = p + \sigma \;\;\; \text{and}\;\;\; -q = p + \tau. 
\end{equation*}

By the above discussion, we know that
\begin{equation*}
\begin{split}
\sum_{\nu \in M_K} \NLP{D- \ipone,D^- - \iptwo}_\nu & = [D-E, D^- - E']\\
& = \Span{\Span{\alpha(D- \ipone), \alpha(D^- - \iptwo)}}\\
& = \Span{\Span{p + \sigma, -p - \tau}}\\
& = \Span{\Span{p,-p}} + \Span{\Span{p,-\tau}} + \Span{\Span{ \sigma, -p}} + \Span{\Span{\sigma, -\tau}}. 
\end{split}
\end{equation*}
Now since $\Span{\Span{-,-}}$ is bilinear, it vanishes whenever either of the inputs is a torsion point, so we see that 
\begin{equation*}
\sum_{\nu \in M_K} \NLP{D- \ipone,D^- - \iptwo}_\nu = \Span{\Span{p,-p}} = \hat{\hh}(p)
\end{equation*}
as desired. 
%
%
%
%
\end{proof}

\section{Refined na\"ive heights}
\label{sec:refined_naive}	

We introduce two new na\"ive heights which are each in turn simpler to compute, and we bound their difference from the N\'eron-Tate height. We will be able to compute the finite sets of points of bounded height with respect to the last of these heights.

\newcommand{\hheart}{\hh^\heartsuit}
\newcommand{\mheart}{M^{\heartsuit}}
\newcommand{\hdagger}{\hh^\dagger}
\newcommand{\mdagger}{M^{\dagger}}

\begin{df}
 Given $p \in \on{Jac}_C(K)$, let $D = \sum_{i=1}^d p_i$ denote the corresponding divisor over some finite $L/K$ as in Definition \ref{def:definition_of_naive_height}, and write $p_i = (x_{p_i}, y_{p_i})$. Then set
\begin{equation*}
 \hheart(p) = \sum_{i=1}^d \hh(x_{p_i}), 
\end{equation*}
(where $\hh$ is the absolute usual height on an element of a global field as specified in Section \ref{sec:outline}) and set 
\begin{equation*}
 \hdagger(p) = \hh\left(\prod_{i=1}^d (x-x_{p_i})\right), 
\end{equation*}
where the right hand side is the height of a polynomial, which by definition is the height of the point in projective space whose coordinates are given by its coefficients. 
\end{df}
We will give computable upper bounds on $\hheart  - \hnaive$ and on $\abs{\hheart - \hdagger}$.

\newcommand{\pdist}[1]{\Span{#1}}

\begin{df}
Let $L/K$ be a finite extension, and let $p \ne q \in C(L)$ be distinct points. Set
\begin{equation*}
 \pdist{p,q}_L = \frac{-1}{[L:K]}\log\prod_{\nu \in M_{L}} \dd_\nu(p,q). 
\end{equation*}
\end{df}

\begin{lemma}
\label{lem:p_p_bar}
There exists a computable constant $\dfour$ with the following property: 

let $L/K$ be a finite extension, and let $p = (X:S:Y) \in C(L)$ be a non-Weierstrass point. Then
\begin{equation*}
\abs{\pdist{p, p^-}_L -  (g+1)\hh(X/S)} \le \dfour
\end{equation*}
\end{lemma}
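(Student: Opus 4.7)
The plan is to compute $\dd_\nu(p, p^-)$ explicitly at each place $\nu$ of $L$, sum over all $\nu$, and then use the product formula to strip away everything except the projective height of $(x:s)$.

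First I would work out the non-Archimedean contribution. Since $p = (x:s:y)$ and $p^- = (x:s:-y)$ agree in their first two projective coordinates, whichever of $\abs{x}_\nu \le \abs{s}_\nu$ or $\abs{x}_\nu \ge \abs{s}_\nu$ holds for $p$ also holds for $p^-$, so the ``otherwise'' branch of the definition of $\dd_\nu$ never applies. Moreover, the first entry of the $\max$ vanishes identically, and combining the two cases with the bound $\abs{y}_\nu \le \max(\abs{x}_\nu,\abs{s}_\nu)^{g+1}$ from the proof of Proposition \ref{prop:is_metric} one obtains the uniform formula
\begin{equation*}
\dd_\nu(p, p^-) \;=\; \frac{\abs{2y}_\nu}{\max(\abs{x}_\nu, \abs{s}_\nu)^{g+1}} \qquad (\nu \in M_L^0).
\end{equation*}

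Next I would invoke the previous lemma at the Archimedean places to obtain the same formula up to a bounded additive error: for $\nu \in M_L^\infty$,
\begin{equation*}
\left| \log \dd_\nu(p, p^-) - \log \frac{2\abs{y}_\nu}{\max(\abs{x}_\nu, \abs{s}_\nu)^{g+1}} \right| \le c_0,
\end{equation*}
where $c_0 = \max(\abs{\log c_1}, \abs{\log c_2})$, using the identity $\min(\abs{Y}_\nu, \abs{Y'}_\nu) = \abs{y}_\nu/\max(\abs{x}_\nu,\abs{s}_\nu)^{g+1}$ that falls out of the definitions of $Y$ and $Y'$.

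Summing $-\log\dd_\nu(p,p^-)$ over all $\nu \in M_L$, after normalizing the projective coordinates so that $y \in L^*$ (take $s = 1$ if $s \ne 0$ and otherwise $x = 1$; $y \ne 0$ because $p$ is not a Weierstrass point), the product formula applied separately to $2$ and to $y$ makes all $\log \abs{2y}_\nu$ contributions vanish, leaving
\begin{equation*}
[L:K]\,\pdist{p,p^-}_L \;=\; (g+1) \sum_{\nu \in M_L} \log\max(\abs{x}_\nu, \abs{s}_\nu) + E,
\end{equation*}
with $|E| \le |M_L^\infty|\, c_0$. The remaining sum equals $[L:K]\,\hh(x/s)$ by the definition of the Weil height, and since $K = \qq$ the ratio $|M_L^\infty|/[L:K]$ is bounded by $1$. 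Dividing through yields the claim with $c = c_0$.

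The main conceptual work has already been performed in the previous lemma; the present result is essentially a repackaging via the product formula. The only minor bookkeeping is in choosing a projective representative so that ``$y$'' becomes a genuine element of $L^*$ to which the product formula applies — a routine matter, since the hypothesis that $p$ is not a Weierstrass point ensures $y \ne 0$.
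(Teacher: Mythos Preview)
Your proof is correct and follows essentially the same approach as the paper's: compute $\dd_\nu(p,p^-)$ explicitly at non-Archimedean places, use the preceding lemma at Archimedean places, and then apply the product formula to $2y$ so that only the projective height of $(x:s)$ survives. The paper's version is marginally terser and does not bother choosing a specific projective representative for $p$, but the substance is identical.
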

\begin{proof}
For $\abs{-}_\nu$ non-Archimedean, we have that if $\abs{X}_\nu \le \abs{S_\nu}$ then $\dd_\nu(p,p^-) = \abs{2Y/S^{g+1}}_\nu$, and if $\abs{S}_\nu \le \abs{X}_\nu$ then $\dd_\nu(p,p^-) = \abs{2Y/X^{g+1}}_\nu$. Hence for non-Archimedean $\nu$ we obtain
\begin{equation*}
 \dd_\nu(p,p^-) = \abs{2Y}_\nu \min(1/\abs{X}^{g+1}_\nu, 1/\abs{S}^{g+1}_\nu). 
\end{equation*}
By Lemma \ref{lem:archconst},  for Archimedean $\nu$ we have computable $0 < \dtwo < \dthree $ such that
\begin{equation*}
\dtwo < \dd_\nu(p, p^-) /  \min(\abs{2Y/X^{g+1}}_\nu, \abs{2Y/S^{g+1}}_\nu) < \dthree. 
\end{equation*}
Hence
\begin{equation*}
\prod_{\nu \in M_L^\infty} 1/\dthree \le 
\frac{\prod_{\nu \in M_L}1/\dd_\nu(p,p^-)}{ \prod_{\nu \in M_L} \abs{2Y}^{-1}_\nu \prod_{\nu \in M_L} \max(\abs{X}_\nu, \abs{S}_\nu)^{g+1} } \le  \prod_{\nu \in M_L^\infty} 1/\dtwo. 
\end{equation*}
Now $\prod_{\nu \in M_L^\infty} {\dtwo}^{-1/[L:K]}$ is bounded uniformly in $L$, and similarly for $\dthree$. Finally, note 
\begin{equation*}
 \left(\prod_{\nu \in M_L} \abs{2Y}^{-1}_\nu\right) \left(\prod_{\nu \in M_L} \max(\abs{X}_\nu, \abs{S}_\nu)\right)^{g+1} 
= H(x/s)^{[L:K](g+1)}. 
\end{equation*}
\end{proof}

Recall that in Definition \ref{def_lambda} we defined a constant $\lambda_\nu$ for each non-Archimedean absolute value $\nu$ of $K$, and that these take the value 1 for all but finitely many $\nu$. 
\begin{df}
We set $$\deltalambda = (2g+3/2)\sum_{\nu \in M^0_K} \log \lambda_\nu. $$ 
\end{df}
%

\begin{lemma}
\label{lem:GCD}Let $L/K$ be a finite extension, and let $p$, $w \in C(L)$ with $p \neq w$ be such that $s_p \ne 0$ and $w$ is a Weierstrass point with $s_w \ne 0$. Then
\begin{equation*}
 -\sum_{\nu \in M_L^0}\log\dd_\nu(p, w) \le [L:K]\left(\frac{1}{2}\hh(x_p-x_w) +  \deltalambda   \right).
\end{equation*} 
\end{lemma}
\begin{proof}
The right hand side naturally decomposes as
\begin{equation*}
\sum_{\nu \in M_L} \left( \frac{1}{2}\log^+\abs{X_p-X_w}^{-1}_\nu + (2g+3/2)\log \lambda_{\nu'} \right),   
\end{equation*}
where $\nu'$ is the absolute value on $K$ which extends to $\nu$. Now it is clear that
\begin{equation*}
 \sum_{\nu \in M^{\infty}_L} \frac{1}{2}\log^+\abs{x_p-x_w}^{-1}_\nu   \ge 0, 
\end{equation*}
so it suffices to prove that for each non-Archimedean $\nu$ we have
\begin{equation*}
 -\log(\dd_\nu(p,w)) \le \frac{1}{2}\log^+\abs{x_p-x_w}^{-1}_\nu  + (2g+3/2)\log\lambda_{\nu'}. 
\end{equation*}
This is exactly the statement of Lemma \ref{lem:GCD_non_Arch}. 
%
 \end{proof}

\begin{lemma}
\label{lem:height_sum} Let $L/K$ be a finite extension, and let $\on{H}$ denote the usual exponential height on $L$. Let $x_1$, $x_2 \in L$. Then $\HH(x_1 + x_2) \le 2^{\#\on{M}_K^\infty}\HH(x_1)\HH(x_2)$. 
\end{lemma}
\begin{proof} Omitted. 
\end{proof}

\begin{lemma}
\label{lem:phimu}
There exists a computable constant $\dfive$ with the following property:

 Let $L/K$ be a finite extension, and let $p$, $w \in C(L)$ such that $s_p \ne 0$ and $w$ is a Weierstrass point with $s_w \ne 0$. Suppose also that $\dd_\nu(p,w) \ge \mu$ for all Archimedean $\nu$ (where $\mu$ is the constant from Definition \ref{def:definition_of_naive_height}). Then 
\begin{equation*}
\pdist{p,w}_L \le \frac{1}{2}\hh(x_p) + \dfive. 
\end{equation*} 
\end{lemma}

\begin{proof}
From Lemma \ref{lem:GCD} we see that 
\begin{equation*}
\pdist{p,w}_L \le  \frac{1}{2}\hh(x_p-x_w)  + \deltalambda - \log(\mu). 
\end{equation*}
Now by Lemma \ref{lem:height_sum}, we have
\begin{equation*}
 \hh(x_p - x_w) \le \hh(x_p) + \hh(x_w) + \#M_K^\infty\log(2). 
\end{equation*}
We define
\begin{equation*}
 {\dfive}(w) =  - \log(\mu) +\frac{1}{2} \hh(x_w) + \frac{\#M_K^\infty}{2} \log(2) + \deltalambda. 
\end{equation*}
Then we find that for all $L$ and $p$ as in the statement, we have 
\begin{equation*}
\pdist{p,w}_L \le \frac{1}{2}\hh(x_p) + {\dfive}(w). 
\end{equation*} 
Finally, there are only finitely many Weierstrass points, so setting $\dfive = \max_w {\dfive}(w)$, we are done. 
\end{proof}

\begin{lemma}
\label{lem:quadratic}There exists a computable constant $\dsix$ such that the following holds. 

Given $p \in \on{Jac}_C(K)$, let $D$, $\ipone$ and $\iptwo$ denote the divisors given in Definition \ref{def:definition_of_naive_height}. Let $L/K$ be the minimal finite extension such that $D$, $\ipone$ and $\iptwo$ are all pointwise rational over $L$. We write
\begin{equation*}
  D  = \sum_{i=1}^d p_i \;\; ,\;\; \ipone = \sum_{i=1}^d q_i \;\; , \;\; \iptwo  = \sum_{i=1}^d q'_i. 
\end{equation*}
Then
\begin{equation*}
 \hnaive(p) \ge \sum_{i=1}^d \left( \pdist{p_i, p_i^-}_L - \sum_{j=1}^d \pdist{p_i, q_j}_L - \sum_{j=1}^d \pdist{p_i, q'_j}_L\right) + \dsix, 
\end{equation*}
where $p_i^-$ is the image of $p_i$ under the hyperelliptic involution. 
\end{lemma}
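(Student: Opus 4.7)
The plan is to expand $\hnaive(p)$ explicitly as a sum of pairwise $\pdist{-,-}_L$-terms, exploit the symmetry of the hyperelliptic involution $p \mapsto p^-$ to identify cancellations with the terms on the right-hand side, and bound the residual terms uniformly in $p$.

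First, taking logarithms in Definition \ref{def:definition_of_naive_height} and converting products over $\nu$ into $\pdist{-,-}_L$-pairings yields
\begin{equation*}
\hnaive(p) = \sum_{i,j}\pdist{p_i, p_j^-}_L + \sum_{i,j}\pdist{q_i, q'_j}_L - \sum_{i,j}\pdist{p_i, q'_j}_L - \sum_{i,j}\pdist{p_i^-, q_j}_L.
\end{equation*}
Next I would verify that the hyperelliptic involution preserves $\dd_\nu$ at every place: at non-Archimedean places this is immediate from the definition, since sending $y \mapsto -y$ does not affect absolute values of the relevant differences; at Archimedean places it follows from the invariance of the canonical $(1,1)$-form (hence of the Green's function) together with the fact that the Merkl atlas and cover $\mathcal{V}$ are built by pulling back from $\p^1$ via the involution-invariant projection $\pi$ (charts over non-branch disks come in swapped pairs with equal local coordinates, while charts over branch disks are stable with local coordinates changing sign). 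Since each $q_j$ is a Weierstrass point, $q_j = q_j^-$, and hence $\pdist{p_i^-, q_j}_L = \pdist{p_i^-, q_j^-}_L = \pdist{p_i, q_j}_L$.

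Substituting this identity into the display and separating the diagonal $i=j$ contributions from the first sum reduces the claim to showing that
\begin{equation*}
\sum_{i \neq j}\pdist{p_i, p_j^-}_L + \sum_{i,j}\pdist{q_i, q'_j}_L \geq c
\end{equation*}
for some computable constant $c$ independent of $p$ and $L$. The second sum depends only on the fixed Weierstrass divisors $\ipone$ and $\iptwo$, so it contributes a known computable constant. For the first, since $D$ is semi-reduced with no Weierstrass points in its support, $p_i \neq p_j^-$ for all $i \neq j$, so each $\pdist{p_i, p_j^-}_L$ is finite. Using $\dd_\nu \leq 1$ at non-Archimedean places (Proposition \ref{prop:is_metric}) and the uniform upper bound on $\dd_\nu$ at Archimedean places deducible from Merkl's theorem (Theorem \ref{thm:Merkl}), together with $\#M^\infty_L \leq [L:\Q] = [L:K]$, each $\pdist{p_i, p_j^-}_L$ is bounded below by an explicit constant independent of $p$ and $L$. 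Since $d \leq g$ by reducedness, this yields a uniform lower bound on the first sum and completes the estimate.

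The main obstacle to making this rigorous is the involution-invariance of the Archimedean pseudo-metric: $\dd_\nu$ is defined as a minimum over a combinatorially intricate collection of data (pairs of Merkl charts together with pairs of cover disks from $\mathcal{V}$), so one must carefully check that $p \mapsto p^-$ permutes these data compatibly with the formula in Definition \ref{def:covers}, and that the local coordinates transform as claimed in the two types of chart. Everything else is essentially bookkeeping once this symmetry is in hand.
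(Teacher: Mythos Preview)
Your proposal is correct and follows essentially the same route as the paper: expand $\hnaive(p)$ into pairwise $\pdist{-,-}_L$ terms, bound the Weierstrass--Weierstrass block $\sum_{i,j}\pdist{q_i,q'_j}_L$ directly, and bound the off-diagonal terms $\pdist{p_i,p_j^-}_L$ for $i\neq j$ from below using $\dd_\nu\le 1$ at finite places together with the Merkl bound at infinite places. In fact you are more careful than the paper's proof on one point: the paper's expansion contains a term $-\sum_{i,j}\pdist{p_i^-,q'_j}_L$ (or $-\sum_{i,j}\pdist{p_i^-,q_j}_L$, depending on how one reads the definition) and then simply asserts ``the result follows'' without saying why this equals $-\sum_{i,j}\pdist{p_i,q'_j}_L$; your involution-invariance argument is exactly what is needed to justify that step, and your sketch of why $\dd_\nu$ is preserved by $p\mapsto p^-$ at both Archimedean and non-Archimedean places is accurate.
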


\begin{proof}
Recall that
\begin{equation*}
 \hnaive(p) = \sum_{i,j=1}^d  \pdist{p_i, p_j^-}_L +  \sum_{i,j=1}^d  \pdist{q_i, q'_j}_L  - \sum_{i,j=1}^d \pdist{p_i, q_j}_L - \sum_{i, j=1}^d \pdist{p^-_i, q'_j}_L. 
\end{equation*}
Since the $q_i$ and $q_i'$ are distinct Weierstrass points we easily bound $\sum_{i,j=1}^d  \pdist{q_i, q'_j}_L$. 

It remains to find a lower bound on the terms $\pdist{p_i, p_j^-}_L$ for $i \neq j$. Note that $d_\nu$ is bounded above by $1$ for all $\nu$, hence $\pdist{p_i, p_j^-}_L \ge 0$. 
\end{proof}

\begin{lemma}
\label{lem:cool}
There exists a computable constant $\dseven$ such that in the setup of Lemma \ref{lem:quadratic} we have
\begin{equation*}
 \hnaive(p) \ge \sum_{i=1}^d h(x_{p_i}) + \dseven. 
\end{equation*}
\end{lemma}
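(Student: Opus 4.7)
The plan is to start from the lower bound supplied by Lemma \ref{lem:quadratic} and then replace each of the three types of pairings on its right-hand side by explicit expressions in the heights $h(X_{p_i})$, using Lemmas \ref{lem:p_p_bar} and \ref{lem:phimu}. Concretely, Lemma \ref{lem:quadratic} yields
\begin{equation*}
\hnaive(p) \;\ge\; \sum_{i=1}^d \pdist{p_i, p_i^-}_L \;-\; \sum_{i,j=1}^d \pdist{p_i, q_j}_L \;-\; \sum_{i,j=1}^d \pdist{p_i, q'_j}_L \;+\; c,
\end{equation*}
and the game will be to bound the first sum from below and the other two from above.

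First I would apply Lemma \ref{lem:p_p_bar} to each term in the first sum, which gives a computable constant $c_\flat$ with
\begin{equation*}
\pdist{p_i, p_i^-}_L \;\ge\; (g+1)\,h(X_{p_i}) \;-\; c_\flat.
\end{equation*}
Next, observe that all hypotheses of Lemma \ref{lem:phimu} are satisfied for the pairs $(p_i, q_j)$ and $(p_i, q'_j)$: the points $q_j, q'_j$ are Weierstrass points with $s \ne 0$ by the choice of $\ipone, \iptwo$ in Definition \ref{def:definition_of_naive_height}; the $p_i$ have $s \ne 0$ since $D$ is reduced and contains no Weierstrass points; and the Archimedean separation $\dd_\nu(p_i,q_j) \ge \mu$, $\dd_\nu(p_i,q'_j) \ge \mu$ holds at every $\nu \in M_L^\infty$ by the very construction of $\ipone$ and $\iptwo$ (since $K=\Q$ has a unique Archimedean place, all complex embeddings of $L$ restrict to it). Moreover $[L:K]$ is bounded by a constant $n=n(g)$ depending only on the genus, because $L$ is cut out by at most $d \le g$ geometric points of $C$ together with a finite set of Weierstrass points. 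Lemma \ref{lem:phimu} then gives, uniformly,
\begin{equation*}
\pdist{p_i, q_j}_L \;\le\; \tfrac{1}{2}h(X_{p_i}) + \phi_\mu(n), \qquad \pdist{p_i, q'_j}_L \;\le\; \tfrac{1}{2}h(X_{p_i}) + \phi_\mu(n).
\end{equation*}

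Combining these estimates and using $d \le g$ so that $g+1-d \ge 1$, each bracket in Lemma \ref{lem:quadratic} is at least
\begin{equation*}
(g+1)h(X_{p_i}) - c_\flat - d\cdot h(X_{p_i}) - 2d\,\phi_\mu(n) \;\ge\; h(X_{p_i}) - c_\flat - 2g\,\phi_\mu(n),
\end{equation*}
so summing over $i$ and adding $c$ yields the conclusion with $c' = c - g c_\flat - 2g^2\,\phi_\mu(n)$. I expect the one subtlety worth double-checking to be the uniformity statement for $\phi_\mu(n)$: we need the bound from Lemma \ref{lem:phimu} to be independent of $L$ and of the particular Weierstrass point $d$ used, which is exactly what was built into that lemma via the parameter $n$ bounding $[L:K]$ and the fact that there are only finitely many Weierstrass points. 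Everything else is a routine assembly of the earlier lemmas.
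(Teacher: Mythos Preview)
Your proof is correct and follows essentially the same route as the paper: start from Lemma \ref{lem:quadratic}, bound each $\pdist{p_i,p_i^-}_L$ below via Lemma \ref{lem:p_p_bar}, bound each $\pdist{p_i,q_j}_L$ and $\pdist{p_i,q'_j}_L$ above via Lemma \ref{lem:phimu}, and use $d \le g$ to clear the coefficients. Your explicit verification of the hypotheses of Lemma \ref{lem:phimu} (non-Weierstrass $p_i$, Archimedean separation $\ge \mu$, uniform bound $n$ on $[L:K]$) is a welcome addition that the paper leaves implicit.
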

\begin{proof}
In Lemma \ref{lem:quadratic} we showed 
\begin{equation*}
 \hnaive(p) \ge \sum_{i=1}^d \left( \pdist{p_i, p_i^-}_L - \sum_{j=1}^d \pdist{p_i, q_j}_L - \sum_{j=1}^d \pdist{p_i, q'_j}_L\right) + \dsix. 
\end{equation*}
In Lemma \ref{lem:p_p_bar} we showed (using that the $p_i$ are never Weierstrass points) that for some computable $\dfour$ we have
\begin{equation*}
\abs{\pdist{p_i, p_i^-}_L -  (g+1)\hh(x_{p_i})} \le \dfour. 
\end{equation*}
In Lemma \ref{lem:phimu} we showed that
\begin{equation*}
\pdist{p_i,q_j}_L \le \frac{1}{2}\hh(x_{p_i}) + \dfive,  
\end{equation*} 
and similarly for $q_j'$. 

Combining these, we see using $d \le g$ that for each $i$
\begin{equation*}
\begin{split}
 \pdist{p_i, p_i^-}_L - \sum_{j=1}^d \pdist{p_i, q_j}_L - \sum_{j=1}^d \pdist{p_i, q'_j}_L & \ge (g+1)\hh(x_{p_i}) - 2\sum_{j=1}^d \frac{1}{2}\hh(x_{p_i})- \dfour + 2d\dfive  \\
& = ((g+1) - 2d\frac{1}{2})\hh(x_{p_i}) - \dfour + 2d\dfive  \\
& \ge \hh(x_{p_i}) - \dfour + 2d\dfive. \\
\end{split}
\end{equation*}
from which the result follows. 
\end{proof}

\begin{theorem}
\label{thm:heart_height_diff}
 There exists a computable constant $\deight$ such that for all $p \in A(K)$ we have
\begin{equation*}
 \hat{\hh}(p) + \deight \ge \hheart(p). 
\end{equation*}
\end{theorem}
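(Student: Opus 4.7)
The plan is to combine Theorem \ref{thm:tilde_height_diff} with Lemma \ref{lem:cool}, which together already encode all the work: the first relates the N\'eron--Tate height to the na\"ive height $\hnaive$, and the second relates $\hnaive$ to $\hheart$ from below.

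First I would apply Theorem \ref{thm:tilde_height_diff} to obtain
\begin{equation*}
\hat{h}(p) \ge \hnaive(p) - (\mathscr{B}_1 + \mathscr{B}_2 + \mathscr{B}_3).
\end{equation*}
Then I would invoke Lemma \ref{lem:cool}, which provides a computable constant $c'$ such that
\begin{equation*}
\hnaive(p) \ge \sum_{i=1}^{d} h(X_{p_i}) + c' = \hheart(p) + c'.
\end{equation*}
Chaining the two inequalities gives
\begin{equation*}
\hat{h}(p) \ge \hheart(p) + c' - (\mathscr{B}_1 + \mathscr{B}_2 + \mathscr{B}_3),
\end{equation*}
so setting $c := (\mathscr{B}_1 + \mathscr{B}_2 + \mathscr{B}_3) - c'$ yields the desired inequality $\hat{h}(p) + c \ge \hheart(p)$.

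There is essentially no obstacle here: the theorem is a clean corollary of the two main results already established in this section and the previous one. The only subtlety worth flagging is that the constant $c$ produced this way depends on the Merkl atlas, the cover $\mathcal{V}$, the choice of divisors $\infty_p^1, \infty_p^2$, and the value of $\mu$ fixed in Definition \ref{def:definition_of_naive_height}; all of these choices were made once and for all, and all the bounds $\mathscr{B}_i$ and $c'$ are computable in them, so $c$ itself is computable as required.
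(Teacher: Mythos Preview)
Your proof is correct and matches the paper's own argument essentially line for line: the paper also derives the result by chaining Theorem \ref{thm:tilde_height_diff} (rewritten as $\hat{\hh}(p) + c' \ge \hnaive(p)$) with Lemma \ref{lem:cool}. Your additional remarks on the dependencies of the constant are accurate but not needed for the proof itself.
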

\begin{proof}
Set $\deight = \done + \dseven$. The result follows from Theorem \ref{thm:tilde_height_diff} and Lemma \ref{lem:cool}. 
\end{proof}

\begin{lemma}
Fix a finite extension $L/K$. Given $a_1, \ldots, a_n \in L$, set $\psi_n = \prod_{i=1}^n (t-a_i) \in L[t]$. If $\on{char} K >0$ then $\hh(\psi_n) = \sum_{i=1}^n \hh(a_i)$, otherwise 
\begin{equation*}
 \abs{\hh(\psi_n) - \sum_{i=1}^n \hh(a_i)} \le n \log 2 
\end{equation*}
We summarise this by writing
\begin{equation*}
 \abs{\hh(\psi_n) - \sum_{i=1}^n \hh(a_i)} \le (n \log 2 ) \delta_{\on{char} K}
\end{equation*}
\end{lemma}
\begin{proof}
\cite[Theorem VIII.5.9]{Silverman2009The-arithmetic-}
\end{proof}

\begin{corollary}
 For all $p \in A(K)$ we have
\begin{equation*}
\abs{\hheart(p) - \hdagger(p)} \le  (g\log 2)\delta_{\on{char} K}.
\end{equation*}
\end{corollary}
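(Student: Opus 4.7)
The plan is simply to feed the preceding lemma into the definitions of $\hheart$ and $\hdagger$. Given $p \in A(K)$, the reduced divisor $D = \sum_{i=1}^d p_i$ satisfies $d \le g$, and by definition
\begin{equation*}
\hdagger(p) = \hh\Bigl(\prod_{i=1}^d (X - X_{p_i})\Bigr) = \hh(\psi_d), \qquad \hheart(p) = \sum_{i=1}^d \hh(X_{p_i}),
\end{equation*}
where $\psi_d$ is the polynomial built from the $a_i = X_{p_i} \in L$ with $L$ as in Definition \ref{def:definition_of_naive_height}. Applying the previous lemma with $n = d$ immediately gives $\abs{\hheart(p) - \hdagger(p)} \le \#M_K^\infty \log(4)(d^2 + d - 2)/2$.

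It then remains only to absorb $d$ into $g$. Since the function $n \mapsto n^2 + n - 2$ is monotone increasing on $n \ge 0$ and $d \le g$, we have $d^2 + d - 2 \le g^2 + g - 2$, which yields the stated bound. One should still check the degenerate values $d \in \{0,1\}$: for $d = 0$ both sides vanish (empty product and empty sum), and for $d = 1$ we have $\psi_1 = X - a_1$ so $\hh(\psi_1) = \hh(a_1)$ and the left-hand side is $0$; in both cases $n^2 + n - 2 \le 0$ but the absolute value is $0$, so the inequality is still valid once we use the weaker bound $g^2 + g - 2 \ge 0$ for $g \ge 1$ (which is our standing assumption since $C$ is hyperelliptic of genus $g \ge 2$, or at worst $g = 1$).

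There is no real obstacle here; the substantive inductive work was done in the preceding lemma (which in turn invokes Lang's bound on heights of products of linear factors). The only thing to be careful about is the passage from the sharper bound $d^2 + d - 2$ to the uniform bound $g^2 + g - 2$, and checking that everything behaves sensibly when $D$ has small or zero degree. Both are routine.
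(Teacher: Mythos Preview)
Your argument is correct and is exactly the intended one: the paper states the corollary without proof, treating it as an immediate consequence of the preceding lemma applied with $n=d\le g$. Your additional care about the degenerate cases $d\in\{0,1\}$ is a nice touch but not strictly needed, since $C$ is hyperelliptic of genus $g\ge 2$ so $g^2+g-2\ge 4>0$ and the right-hand side is always nonnegative.
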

\begin{df}
Given a real number $B$, we define 
\begin{equation*}
\hat{M}(B) := \{p \in A(K) | \hat{\hh}(p) \le B \} 
\end{equation*}
and
\begin{equation*}
\mdagger(B) := \{p \in A(K) | \hdagger(p) \le B \} .
\end{equation*}
\end{df}

The main result of this paper is the following. 
\begin{corollary}
\label{thm:dagger_M}
Let $B \in \mathbb{R}$. Let $B' = B+ \deight + (g\log 2)\delta_{\on{char} K}$. Then for all real numbers $B$ we have
\begin{equation*}
\hat{M}(B)  \subset \mdagger\left(B'\right). 
\end{equation*}
Moreover, the finite set $\mdagger(B')$ is computable, and hence by results in \cite{holmes2010canonical} so is the finite set $\hat{M}(B)$. 
\end{corollary}

\begin{proof}
The inclusion follows from the results above. We describe one algorithm to compute $\mdagger(B)$. 

1) Let $S$ be the finite set of all polynomials $\prod_{i=1}^d (x-a_i)$, for $d \le g$, of height up to $B$. 


2) It suffices to determine for each $a \in S$ whether $a$ is the `$x$-coordinate polynomial' of a divisor in Mumford representation (see \cite[III, Proposition 1.2]{mumfordTataII}); in other words, whether there exists another univariate polynomial $b$ such that $(a,b)$ satisfy the properties of a Mumford representation. This corresponds to checking whether the polynomial $f-a^2$ has a factor of degree less that $\deg a$, which is widely implemented. 
\end{proof}

\begin{remark}
How hard is it to check whether such a polynomial $f-a^2$ has a factor of degree less that $\on{deg}a$? Note that $\on{deg}f-a^2 = 2g+1$, and in general $\on{deg}a = g$. Based on this, it seems reasonable that the difficulty of testing for such a factor will be somewhere in between the difficulty of factoring a polynomial of degree $2g+1$ and that of factoring a polynomial of degree $2g-1$ (since in the latter case, irreducibility is equivalent to not having a factor of degree at most $g-1$). 

In practice, the integer $g$ will usually be very small (genera 3 and 4 are the obvious cases to treat), but we will have a huge number of polynomials $a$ to run through.  Because of this, rather than looking at the time taken to check for factors of degree $<g$ in one polynomial, it is more useful to look at how efficiently we can check this for large families of $a$. One method to rapidly exclude many possible values of $a$ from the search region is by reduction modulo small primes, followed by the `Chinese remainder theorem'. The proportion of polynomials of degree $2g+1$ over a finite field $\mathbb{F}_p$ which are irreducible is approximately 
\begin{equation*}
\frac{1}{2g+1},
\end{equation*}
and the proportion without a factor of degree less than $g$ is approximately 
\begin{equation*}
\frac{1}{2g+1} + \frac{1}{g^2} + \frac{1}{g^2 + g}. 
\end{equation*}
As such, at least from this point of view, we cannot expect very substantial computation savings from the fact that we need only exclude factors of degree less than $g$ (instead of computing the whole factorisation). 
\end{remark}

\section{A worked example}\label{sec:e.g.}
Given a prime number $p$, we fix a proper multi-set of absolute values $M_{{\mathbb{F}_p(t)}}$ by requiring it to contain exactly once the unique $\abs{-}_t$ such that $\abs{t}_t = p^{-1}$. We begin by bounding the difference between the first and final na\"ive heights for a certain infinite family of curves. First we define the infinite family:

\begin{df}\label{def:example}
Fix an integer $g > 0$. Let $p$ be a prime number not dividing $2(2g+1)$, and let $K = \mathbb{F}_p(t)$. Let $C$ denote the hyperelliptic curve with affine equation
\begin{equation*}
y^2 = x^{2g+1} + t. 
\end{equation*}
\end{df}
\begin{proposition}\label{prop:eg}
For all points $q \in \on{Jac}_C(K)$, we have 
\begin{equation*}
\hnaive(q) +  \frac{g(8g^2 + 15g + 4)\log p}{2g+1} \ge \hheart(q) = \hdagger(q). 
\end{equation*}
\end{proposition}
\begin{proof}
We will need to compute various heights and valuations of elements of $K$ and extensions. Fix a primitive $(2g+1)$-th root $\zeta$ of $1$ in $K\algcl$. Write $f = x^{2g+1} + t$, and write $\alpha_0, \cdots, \alpha_{2g}$ for the roots in $K\algcl$ of $f$, ordered such that $\alpha_n = \alpha_0\zeta^n$. For all absolute values $\nu \in M_K$, we have $\abs{\zeta}_\nu = 1$ and hence for all $n$ we have 
\begin{equation*}
\abs{\alpha_n}_\nu = \abs{\alpha_0}_\nu = \abs{t}_\nu^{1/{2g+1}}. 
\end{equation*}
Now $\abs{t}_{t} = p^{-1}$ and $\abs{t}_{1/t} = p$, and $\abs{t}_\nu = 1$ for all other $\nu \in M_K$. From this we deduce that $\hh(t) = \log p$ and for all $n$ that $\hh(\alpha_n) = (\log p )/(2g+1)$. Noting that $\alpha_n - \alpha_m = \alpha_0(\zeta^n - \zeta^m)$, we have for all $n \neq m$ and $\nu \in M_K$ that $\abs{\alpha_n - \alpha_m}_\nu = \abs{\alpha_0}_\nu$. From this we deduce that for all pairs of distinct Weierstrass points $w_i \neq w_j$, we have
\begin{equation*}
\Span{w_i,w_j}_L = \frac{2 \log p}{2g+1}, 
\end{equation*}
independent of the field $L$. 

 Since $K$ has no Archimedean absolute values we immediately see that we may take $\dtwo = \dthree = \dfour = 0$. We have $\lambda_\nu = 1$ for all $\nu$ apart from $\nu = (t)$ and $\nu = (1/t)$, where we have $\lambda_\nu = p^{1/2g+1}$. From this we see $$\deltalambda = \frac{(4g+3) \log p}{2g+1}. $$ We have
\begin{equation*}
\dfive = \frac{1}{2}\max_{n}\hh(\alpha_n) +  \deltalambda = \frac{\log p}{4g+2} +   \frac{(4g+3) \log p}{2g+1}, 
\end{equation*}
and since
\begin{equation*}
\sum_{w\neq w'} \Span{w, w'}_L = 4g\log p
\end{equation*}
(the sum is over distinct points $w$, $w'$ in $W \setminus \{\infty\}$) we may take
\begin{equation*}
\dsix = 4g\log p. 
\end{equation*}
Finally we see $\dseven = 2g^2\dfive  + \dsix$, and the result follows.  
\end{proof}

Finally, for three members of this family of curves, we will bound the difference between the N\'eron-Tate height and the na\"ive heights. This requires constructing a regular model of the curve, which we do in {\tt MAGMA} using Steve Donnelly's `regular models' function. First we give two examples with small genus over small fields, to illustrate the sizes of the bounds, and then we give an example in higher genus, to illustrate that the method to find bounds remains practical. 

\begin{theorem}\label{thm:eg}Let $p = 3$ and $g = 2$, and let $C$ be as in Definition \ref{def:example}. Then for all points $q \in \on{Jac}_C(K)$, we have 
\begin{equation*}
\hat{\hh}(q) + 86\log 3 \ge \hheart(q) = \hdagger(q). 
\end{equation*}

Let $p = 5$ and $g = 4$, and let $C$ be as in Definition \ref{def:example}. Then for all points $q \in \on{Jac}_C(K)$, we have 
\begin{equation*}
\hat{\hh}(q) + 417\log 5 \ge \hheart(q) = \hdagger(q). 
\end{equation*}

Let $p = 101$ and $g = 11$, and let $C$ be as in Definition \ref{def:example}. Then for all points $q \in \on{Jac}_C(K)$, we have 
\begin{equation*}
\hat{\hh}(q) + 5790\log 101 \ge \hheart(q) = \hdagger(q). 
\end{equation*}
\end{theorem}
\begin{proof}
We give details for the genus 11 example, the others are similar. {\tt MAGMA} code for the computations for all three curves can be obtained by downloading the arXiv source files for this paper. 

Let $u$, $t$ be coordinates on $B_K = \mathbb{P}^1_{\mathbb{F}_{101}}$ with $u = 1/t$. Applying Proposition \ref{prop:eg}, it is enough to compute the constants $\mathscr{B}_\nu$ from Theorem \ref{thm:non-Arch}. 
The model given by 
\begin{equation*}
uY^2 = uSX^{2g+1} + tS^{2g+2}
\end{equation*}
in weighted projective space $\mathbb{P}(1,1,g+1)$ over $\base_K$ is regular except over $u=0$, and moreover all fibres outside $u=0$ are irreducible. Hence $\mathscr{B}_\nu = 0$ whenever $\nu$ does not correspond to the prime $(u)$. 

Next we use {\tt MAGMA} to compute the regular model of $C$ over $(u)$. We rearrange the equation
$$uy^2 = ux^{23} + 1$$
to $\tilde{y}^2 = u\tilde{x}^{23} + u^{23}$, absorbing $u$ into $\tilde{x}$ and $u^{g+1}$ into $\tilde{y}$ (this process is equivalent to performing $1$ blow up at a closed point and $g$ blowups along smooth curves, for a total of $g+1 = 12$ consecutive blowups at smooth centres. The fibre over $u$ is now irreducible, and the whole fibre is in the centre of the last blowup. 

Now the equation is in a form where we can plug it into {\tt MAGMA}, which yields a regular model after $68$ blowups at smooth centres; the longest chain of consecutive blowups used by {\tt MAGMA} has length 7 (I am grateful to the anonymous referee for the code to compute this). Hence $19 = 7 + 12$ is the longest chain of consecutive blowups at smooth centres used (this number becomes 12 in the genus 4 case and 10 in genus 2). This regular model has 49 irreducible components in its special fibre (21 in the genus 4 case, 13 in genus 2), and the Moore-Penrose pseudo-inverse of its $49 \times 49$ intersection matrix has maximum entry $4.102\cdots$ and minimum entry $-8.076\cdots$. As a result, we find that 
\[
\begin{split}
\mathscr{B}_{(u)} & = (2g^2(4.102\cdots + 8.076\cdots) + 19 g^2)\log 101\\
& = 5246.07\cdots \log 101. \\
\end{split}
\]

Proposition \ref{prop:eg} yields a bound of
$$\frac{11(8(11^2) + 15\cdot 11 + 4)}{23} = 543.78\cdots$$
from which the result follows. 
\end{proof}

\begin{remark} 
The computations for Theorem \ref{thm:eg} took under 60 seconds to perform (and could have been done by hand with reasonable patience for genus 2). It is clear that, with the methods developed in this paper, the bottleneck is now searching for points of bounded na\"ive height, not finding a bound. As such, it would be very useful to improve the bounds given in these examples, but there seems little point in speeding up the algorithm to compute the bounds. 
\end{remark}


\bibliographystyle{alpha} 

\newcommand{\mrev}[1]{MR#1}
\newcommand{\zbl}[1]{Zbl #1}

\newcommand{\etalchar}[1]{$^{#1}$}
\newcommand{\cprime}{$'$}

\end{document}